\documentclass{amsart}

\usepackage{amsmath}
\usepackage{cancel}
\usepackage{amssymb}
\RequirePackage[numbers]{natbib}
\usepackage[dvips]{epsfig}
\usepackage{graphicx}
\usepackage{latexsym}
\usepackage{amsthm}
\usepackage{amssymb}
\usepackage{ulem}


\usepackage{amsmath,amsthm,amsfonts,amssymb,amscd,amsbsy,dsfont,hyperref}

\usepackage{palatino}

\usepackage[margin=3cm]{geometry}


\usepackage{eucal}
\usepackage{float}
\usepackage{xcolor}

\newcommand{\etalchar}[1]{$^{#1}$}

\newcommand{\E}{\mathbb E}
\newcommand{\R}{\mathbb R}
\newcommand{\Rd}{\mathbb R^d}
\newcommand{\PPP}{\mathbb P}

\newcommand{\intt}{\int_0^t}
\newcommand{\intT}{\int_0^T}

\newcommand{\intRRd}{\int_{\Rd\times\Rd}}

\newcommand\Pp{{\mathbb P}}

\def\AA{{\mathcal A}}

\def\CC{{\mathcal C}}

\def\FF{{\mathcal F}}

\def\HH{{\mathcal H}}

\def\PP{{\mathcal P}}

\setlength{\oddsidemargin}{.5cm} 
\setlength{\evensidemargin}{.5cm}
\setlength{\textwidth}{15cm} 
\setlength{\textheight}{20cm}
\setlength{\topmargin}{1cm}

\newtheorem{thm}{Theorem}
\newtheorem{assumption}[thm]{Assumption}

\newtheorem{lemma}[thm]{Lemma}

\newtheorem{cor}[thm]{Corollary}
\newtheorem{defi}[thm]{Definition}

\newtheorem*{assumption*}{Assumption}






\title[Estimation of a FitzHugh-Nagumo model]{Statistical estimation of a mean-field FitzHugh-Nagumo model}

\author{Claudia Fonte Sanchez}
\email{claudia.fonte-sanchez@inria.fr}

\author{Marc Hoffmann}
\email{hoffmann@ceremade.dauphine.fr}

\begin{document}

\maketitle

\vspace{-0.4cm}

\begin{center}
INRIA Grenoble, Universit\'e Paris Dauphine-PSL and Institut Universitaire de France 
\end{center}

\vspace{0.1cm}

\begin{abstract}
We consider an interacting system of particles with value in $\R^d \times \R^d$, governed by transport and diffusion on the first component, on that may serve as a representative model for kinetic models with a degenerate component. In a first part, we control the fluctuations of the empirical measure of the system around the solution of the corresponding Vlasov-Fokker-Planck equation by proving a Bernstein concentration inequality, extending a previous result of \cite{della2020nonparametric} in several directions.  In a second part, we study the nonparametric statistical estimation of the classical solution of Vlasov-Fokker-Planck equation from the observation of the empirical measure and prove an oracle inequality using the Goldenshluger-Lepski methodology and we obtain minimax optimality. We then specialise on the FitzHugh-Nagumo model for populations of neurons. We consider a version of the model proposed in  Mischler \textit{et al.} \cite{mischler2016kinetic} an optimally estimate the $6$ parameters of the model by moment estimators. 
\end{abstract}

\vspace{0.3cm}

\noindent \textit{Mathematics Subject Classification (2010)}: 62G05, 62M05, 60J80, 60J20, 92D25.
.

\noindent \textit{Keywords}: FitzHugh-Nagumo model; interacting particle systems; kinetic McKean-Vlasov models, Statistical estimation.

\tableofcontents



\section{Introduction}



\subsection{Setting}

We consider a stochastic system of $N$ interacting agents
\begin{equation} \label{eq: data}
	\big((X_t^1, Y_t^1),\dots,(X_t^N, Y_t^N)\big)_{0 \leq t \leq T},
\end{equation}
with evolving traits $(X_t^i, Y_t^i) \in \R^d \times \R^d$ for $1 \leq i \leq N$ and $t \in [0,T]$, for some (fixed) time horizon $T>0$.
The random process \eqref{eq: data}  solves the system of stochastic differential equations
\begin{equation}\label{eq:generalformulation_population_1}
	\left\lbrace\begin{array}{l}
		dX_t^{i}=F(X_t^i, Y^i_t)dt+\frac{1}{N}\sum_{j=1}^N H(X_t^i-X_t^j,Y_t^i-Y_t^j)+\sigma dB_t^i, \\ \\
		dY_t^i=G(X_t^i, Y_t^i)dt,\\
	\end{array} \right.
\end{equation}
where the $(B^i_t)_{0 \leq t \leq T}$ are independent $\R^d$-valued Brownian motions, $\sigma>0$ is a diffusivity parameter and
and the functions $F, G, H:\R^d \times \R^d \rightarrow \R^d$ satisfy regularity and the growth conditions that we specify below. The evolution of the system is appended with an initial condition
\begin{equation} \label{eq: initial condition} 
\mathcal L\big((X_0^1, Y_0^1),\dots,(X_0^N, Y_0^N)\big) = \mu_0^{\otimes N},
\end{equation}
for some initial probability measure $\mu_0$ on $\R^d \times \R^d$.\\

Such models that describe a state of positions $X_t^i \in \R^d$ and velocity $Y_t^i \in \R^d$ with a mean-field interaction date back to the 1960s \cite{mckean1966class} and were originally introduced in  in plasma physic sto describe the interaction of particles, the particles being electrons or ions. The versatiliy of such models go way beyond statistical physics and  have been key to applied probability as illustrated by the works of Boley \textit{et al.} \cite{bolley2010trend}, Guillin \textit{et al.} \cite{guillin2021kinetic}, Bresch \textit{et al.}\cite{bresch2022new} to name but a few 
 Indeed, the 2010s saw an expansion of the field of applications, spreading its use to collective animal behavior and population dynamics Bolley \textit{et al.} \cite{bolleystochastic}, Mogilner \textit{et al. }\cite{mogilner1999non}; opinion dynamics, see {\it e.g.} Chazelle \textit{et al.} \cite{chazelle2017well}, finance, see {\it e.g.} Fouque and Sun\cite{fouque2012systemic} and finally neuroscience, see {\it e.g.} Baladron \textit{et al. }\cite{baladron2012mean}. In particular, the FitzHugh-Nagumo model that we describe below in details is the primary motivation of the paper: the Fitzhugh-Nagumo model for populations of neurons, first introduced in the works of FitzHugh \cite{fitzhugh1955mathematical} and Nagumo \cite{nagumo1962active} is a kind of simplification of the Huxley-Hodgkin model that describes the evolution of the membrane potential of a neuron \cite{hodgkin1952quantitative}, see in particular Mischler \textit{et al.} \cite{mischler2016kinetic}, Lucon and Poquet \cite{Lucon2019},\cite{LP2}. 
The traits of so-called agents, in this case, correspond to neurons interacting in the same network through electrical synapses.  In particular, the functions $F$ and $G$ describe the part of the behaviour of each agent that only depends on its own state, the so-called {\it single agent behavior}, while the function $H$ describes the interaction between the agents. The stochastic term introduces some randomness, whose intensity is modulated by a diffusivity $\sigma$ parameter.\\

\subsection{Main results}

The objectives of the paper are at least threefold: \\ 

\noindent {\bf 1)} Study the behaviour of the system \eqref{eq:generalformulation_population_1} in the mean-field limit $N \rightarrow \infty$, and prove in particular a sharp Bernstein deviation inequality for the fluctuations of the empirical measure  
$$
	\mu^N_t(dx,dy)=\frac{1}{N}\sum_{j=1}^N\delta_{(X_t^j, Y^j_t)}(dx, dy) 
	$$
around the solution (in a weak sense) of the corresponding Fokker-Planck equation
\begin{equation}
\left\{ 
\begin{array}{l} 
\label{eq: FokkerPlank nonlinear}
	\partial_t \mu_t  = -\nabla ((F,G)\mu_t) -\nabla(\mu_t\int_{\R^d \times \R^d} H(\cdot-x,\cdot-y)\mu_t(dx,dy))+\tfrac{1}{2}\sigma^2\partial_{xx}\mu_t,\\ \\
	\mu_{t = 0} = \mu_0.
	\end{array}
	\right. 
	\vspace{2mm}
\end{equation}

\noindent {\bf 2)} Take advantage of the Bernstein inequality established in {\bf 1)} to develop a systematic nonparametric statistical inference program for $\mu_t(x,y)$ based on empirical data \eqref{eq: data}, {\it i.e.} when we are given $\mu_t^N(dx,dy)$. We construct a kernel estimator of $\mu_t(x,y)$ and establish an oracle inequality that leads to minimax adaptive estimation results.\\ 


\noindent {\bf 3)} Specialise further in a parametrised version of \eqref{eq:generalformulation_population_1} that leads to the Fitzhugh-Nagumo model that describes the evolution of the membrane potential of a neuron. The parame\-trisation takes the form, $F(x,y)=x-x^3/3-y+I$ and $G(x,y)=\frac{1}{c}(x+a-by)$ with $a,I,c>0$, $b\in\R$. The variable  $x$ corresponds to a membrane potential and $y$ a recovery variable; $I$ denotes the total membrane current, $c$ determines the strength of damping while $a$ and $b$  govern two important characteristics of the oscillating solution, namely spike rate and spike duration, see below. In this context, based on data \eqref{eq: data} or on certain averages of these, we construct moment estimators and least-square estimators of the parameters and establish precise nonasymptotic fluctuations or central limit theorems, that enable in turn to achieve uncertainty quantification for the parameters.\\

Concerning point {\bf 1)}, there exists a comprehensive methodology exists in order to quantify the fluctuations of $\mu_t^N-\mu_t$, see for example \cite{tanaka1981central}, \cite{sznitman1984nonlinear}, \cite{sznitman1991topics}, \cite{fernandez1997hilbertian}, \cite{bolley2007quantitative}. We first prove existence and uniqueness of a probability solution of \eqref{eq: FokkerPlank nonlinear} via a weak solution of the corresponding McKean-Vlasov equation in Theorem \ref{theo_Existence_Uniqueness}, following an argument developed by Lacker \cite{lacker2018strong}. This is essential to construct a probability measure on a product space with finite relative entropy w.r.t. the law of \eqref{eq: data} with initial condition \eqref{eq: initial condition}. We can then exploit the strategy developed in Della Maestra and Hoffmann \cite{della2020nonparametric} to extend to kinetic interacting systems a Bernstein inequality in Theorem \ref{theo:Bernstein} that reads 
\begin{align}
\mathrm{Prob}\Big(\int_{[0,T] \times (\R^d \times \R^d)}\phi(t,x,y)(\mu^N-\mu_t)(dx,dy)\big)\rho(dt)\geq \gamma\Big)\leq c_1\exp\Big(-c_2\dfrac{N\gamma^{2}}{|\phi|^2_{L^2}+|\phi|_\infty\gamma}\Big), \label{eq: Bernstein intro}
\end{align}
for every $\gamma \geq 0$ and any real-valued test function $\phi: [0,T]\times (\R^d\times \R^d)$, where $\rho(dt)$ is an arbitrary probability measure and the $L^2$-norm is taken w.r.t. the measure $\mu_t(dx,dy)\otimes \rho(dt)$. The constants $c_1, c_2>0$ depend on the vector fields $F, G, H$, the diffusivity parameter $\sigma >0$ and the initial condition $\mu_0$. Our result extends to models with continuous coefficients that are not necessarily bounded nor globally Lipschitz. Instead, we demand a Lyapunov-like condition as used in the particular case discussed in Wu \cite{wu2001large}.
\\

As for {\bf 2)}, a Bernstein inequality \eqref{eq: Bernstein intro} is the gateway to construct nonparametric estimators with optimal data-driven smoothing parameters. We build a kernel estimator $\widehat{\mu}_{\mathrm{GL}}^{N}$ of the classical solution of \eqref{eq: FokkerPlank nonlinear} from data $\mu_t^N(dx,dy)$ obtained via \eqref{eq: data} using a Goldenschluger-Lepski algorithm \cite{GL08,GL11,GL14} to select an optimal data-driven bandwidth $h \in \mathcal H^N$, for some grid of admissible bandwidths $\mathcal H^N$ that contains a number of points of order no bigger than $N$. We obtain in Theorem \ref{theo:Oracle} an oracle inequality that reads
$$
	\mathbb{E}\big[\big(\widehat{\mu}_{\mathrm{GL}}^{N}\left(t_{0}, x_{0},y_0\right)-\mu_{t_{0}}\left(x_{0},y_0\right)\big)^{2}\big] \lesssim \min _{h \in \mathcal{H}^{N}}(\mathcal{B}_{h}^{N}(\mu)\left(t_{0}, x_{0}\right)^{2}+\mathsf{V}_{h}^{N}),
	$$
where $\mathcal{B}_{h}^{N}(\mu)\left(t_{0}, x_{0}\right)$ is a bias term at scale $h$ that is no bigger than $h^{2\beta}$ if the classical solution $(x,y)\mapsto \mu_t(x,y)$ has H\"older smoothness of order $\beta >0$, and $\mathsf{V}_{h}^{N}$ is a variance term of order $h^{-2d}N^{-1}$ up to an unavoidable logarithmic payment. This shows that the estimator $\widehat{\mu}_{\mathrm{GL}}^{N}$ has minimax optimal squared error of order $N^{-\beta/(\beta+d)}$, up to a logarithmic term, as follows from the general minimax theory of estimation of a function of $2d$-variables with H\"older smoothness $\beta>0$, see for instance the textbook \cite{tsybakov2009introduction}.\\

Finally, as for point {\bf 3)} we move to the main motivation of this work, namely the Fitzhugh-Nagumo model (FhN) for populations of neurons. The FhN model was introduced in FitzHugh \cite{fitzhugh1955mathematical} and Nagumo \cite{nagumo1962active} as a simplification of the Huxley-Hodgkin model (HH) that describes the evolution of the membrane potential of a neuron \cite{hodgkin1952quantitative}. The dynamics is based on two variables, a variable  $x$ which corresponds to the membrane potential and a recovery variable $y$, which satisfy the equations
$$
\left\{
\begin{array}{l}
	\dot{x}=F(x,y),  \\ \\
	\dot{y}=G(x,y),
\end{array}
\right.
$$
with
$$F(x,y)=x-x^3/3-y+I,\;\;\text{and}\;\;G(x,y)=\frac{1}{c}(x+a-by),$$
with $a,I,c>0$, $b\in\R$. For this model, $I$ denotes the total membrane current and is a stimulus applied to the neuron, $c$ determines the strength of damping while $a$ and $b$ 
govern two important characteristics of the oscillating solution, namely spike rate
and spike duration \cite{rudi2022parameter}. With only these elements the system shows the most important properties of the  4-dimensional HH model such as refractoriness, insensitivity to further immediate stimulation after one discharge, and excitability, the ability to generate a large, rapid change of membrane voltage in response to a very small stimulus. Numerous works have been aimed at studying the ODE model and their properties, we reference the book of Rocsoreanu \textit{et al.} \cite{rocsoreanu2012fitzhugh} and the references therein. More recently, specialists have been interested in the passage of the behavior of a neuron to neural networks, see for example Mischler \textit{et al.} \cite{mischler2016kinetic}, Lucon and Poquet \cite{LP2} and Baladron \textit{et al.}\cite{baladron2012mean}. When neurons interact through electrical synapses, it has been proposed that the evolution of $N$ neurons satisfies
\begin{equation}
\label{FhNMVlimit}
	\left\{
	\begin{array}{l}
		dX^i_{t}=(F(X^i_{t},Y^i_{t})-\sum_{j=1}^NJ_{ij}(X^i_{t}-X^j_{t}))dt+\sigma dB_{t}^i,\\ \\
		dY^i_{t}=G(X^i_{t}, Y^i_{t})dt,
	\end{array} 
	\right.
\end{equation}
for $1 \leq i \leq N$, where the coefficients $J_{ij}>0$ represent the effect of the interconnection between the neurons, and the term $B^i_t$ refers, as usual, to independent Brownian motions.\\ 

In the paper, we consider that the interactions are symmetric and identical for every pair of neurons in the network, which in particular implies that all neurons are connected. The strength of the interaction is measured by the parameter $J_{ij}=J$ that we re-parametrize in a mean-field limit as $J_{ij}=\frac{\lambda}{N}$, where $N$ is the number of neurons in the network.
The estimation of the parameters of the FhN model has been approached through different methods, see Che \textit{et al.}  \cite{che2012parameter} for the least squares method, Rudi \textit{et al.} \cite{rudi2022parameter} for Neural Networks and Jensen and Ditlevsen \cite{jensen2012markov} for Markov chain Monte Carlo approach. Yet in most cases the estimations target a selection of the parameters of the FnH equation for a single neuron from measurements of the voltage of neurons whose activity is assumed to be independent. Here we present an alternative method that includes the interaction between neurons. We propose a new method to estimate the parameters $\vartheta=(I, a, b, c, \lambda,\sigma^2)$ based on the observation of the moments of the activity of a neuronal population. we build an estimator $\widehat \vartheta_N$ of $\vartheta$ via empirical moments $\int_{\R^d\times \R^d} \varphi(x,y)\mu_T^N(dx, dy)$, for test functions of the type 
  $$\varphi(x,y)=x^k+y^k\;\;\text{or}\;\;\varphi(x,y)=x^ky^\ell$$
for $k,\ell \geq 0$. Exploiting our the Bernstein inequality of Theorem \ref{theo:Bernstein} again, we prove in Theorem \ref{theo:parameter}
$$\mathrm{Prob}\big(|\widehat \vartheta_N - \vartheta| \geq \gamma\big) \leq \zeta_1\exp\Big(-\zeta_2\frac{N\min(\gamma, 1)^2}{1+\max(\gamma, 1)}\Big),\;\;\text{for every}\;\;\gamma \geq 0,
$$
for some constants $\zeta_1, \zeta_2>0$ that depend (continuously) on the parameters of the models. This nonasymptotic bound shows in particular that the sequence of random variables
$$(\sqrt{N}(\widehat \vartheta_N-\vartheta)\big)_{N \geq 1}$$ is tight, and therefore we estimate $\vartheta$ with the optimal rate of a regular parametric model.
\section{Main results}

\subsection{Model  and assumptions}
We have a fixed time horizon $T>0$ and an ambient dimension $d \geq 1$. The position-velocity state space is $\R^d \times \R^d$, equipped with the Euclidean norm $|\cdot|$. . We denote by $\mathcal{C}(\R^d \times \R^d)$ (or $\mathcal C(\R^d)$) the space of continuous paths from $[0,T]$ to $\R^{d} \times \R^d$ (or $\R^d$), equipped with its Borel sigma-field $\mathcal F_T$ for the norm of uniform convergence. We endow 
the space of all  probability measures on $\CC(\R^d \times \R^d)$ with the Wasserstein 1-metric 
\begin{equation}\label{eq:Wasserstein}
	{\mathcal W}_1(\mu,\nu)=\inf_{m\in \Gamma(\mu,\nu)}\int_{\CC(\R^d \times \R^d)\times\CC(\R^d \times \R^d)}|z_1-z_2|m(z_1,dz_2)=\sup_{|\varphi|_{\mathrm{Lip}}\leq 1}\int\varphi \,d(\mu-\nu),
\end{equation} 
where $\Gamma(\mu,\nu)$ is the set of probability measures on $\CC(\R^d \times \R^d)\times\CC(\R^d \times \R^d)$ with marginals $\mu$ and $\nu$.\\ 

We let  $(X_t,Y_t)_{t\in [0,T]}$ denote the canonical process on $\mathcal{C}$, and $(\mathcal F_t)_{t \in [0,T]}$ its natural filtration, induced by the canonical mappings
$$(X_t,Y_t)(\omega) = \omega_t,$$
and taken to be right-continuous for safety.\\


We are given $b_1: [0,T] \times (\R^d\times\R^d) \times \mathcal{P}(\R^d \times \R^d) \rightarrow \R^d$ and $b_2: [0,T] \times (\R^d \times \R^d)\rightarrow \R^d$, two vector fields, $\sigma >0$ a constant diffusivity parameter and $\mu_0$ a probability over $\R^d$.  We are interested in the existence and uniqueness of a probability measure $\bar \PPP\in \PP(\CC)$ such that the canonical process $(X_t,Y_t)_{t\in [0,T]}$  solves 
\begin{equation} \label{eq_general_mf}
\left\{
\begin{array}{l}
		X_t  = X_0 + \int_0^t b_1(s,X_s, Y_s, (X_s,Y_s) \circ \bar \PPP)ds+\sigma B_t, \\ \\
		Y_t  = Y_0 + \int_0^t b_2(s,X_s, Y_s)ds, 		\\ \\
		\mathcal{L}(X_0,Y_0)=\mu_0,
	\end{array}
	\right.
\end{equation}
where 
$$B_t = \frac{1}{\sigma}\big(X_t  - X_0 - \int_0^t b_1(X_s, Y_s, X_s \circ \bar  \PPP)ds\big)$$ is a $d$-dimensional Browian motion under $\bar \PPP$ and $(X_s,Y_s) \circ \bar \PPP$ is the image measure of $\bar \PPP$ by the mapping $\omega \mapsto (X_s(\omega),Y_s(\omega)$ which is nothing but  the law of the marginal $(X_s,Y_s)$ at time $s$ under $\bar \PPP$.\\


We need minimal assumptions on $(\mu_0, b_1, b_2)$ to ensure the well-posedness of \eqref{eq_general_mf}.

\begin{assumption}\label{ass:initial_condition}
	For some $\kappa>0$ and all $p\geq 1$ the initial condition $\mu_0$ satisfies
	\begin{equation*}
		\int_{\R^{d}\times \R^d} |x|^{2p}\mu_0(dx,dy)\leq \kappa \tfrac{p}{2}!.
	\end{equation*}
\end{assumption}

\begin{assumption}\label{ass:b_conditions} 
	\begin{enumerate}
		\item[(i)] 
		We have
		$$b_1(t,x,y,\mu)=\int_{\R^d}\tilde{b}_1(t,(x,y),(u,v))\mu(du, dv)$$ for some $\tilde{b}_1:[0,T]\times (\R^d\times\R^d)\times (\R^d\times \R^d)\rightarrow\R^d$ 
		for which there exists $k_1>0$ such that
		$$\sup_{t,x,y}|\tilde{b}_1(t,(x,y),(u_1,v_1))-\tilde{b}_1(t,(x,y),(u_2,v_2))|\leq k_1(|u_1-u_2|+|v_1-v_2|).$$
		\item[(ii)]  
		There exist $k_2>0$ such that $$|b_2(t_1,x_1,y_1)-b_2(t_2,x_2,y_2)|\leq k_2(|t_1-t_2|+|x_1-x_2|+|y_1-y_2|).$$
		\item[(iii)] There exists $k_3>0$ that such that
		\begin{equation*}
		\left\{
		\begin{array}{l}
			x^\top\tilde{b}_1(t,(x,y),(u,v))\leq k_3(1+|(x,y)|^2+|(u,v)|^2),\\ \\
			y^\top b_2(t,x,y)\leq k_3(1+|(x,y)|^2).
			\end{array}
		\right.
		\end{equation*}
	\end{enumerate}
\end{assumption}


\subsection{Probabilistic results}

\subsubsection*{Well posedness of the McKean-Vlasov equation \eqref{eq_general_mf}}

\begin{thm} \label{theo_Existence_Uniqueness}
	Work under Assumptions \ref{ass:initial_condition} and \ref{ass:b_conditions}. Then \eqref{eq_general_mf} has a unique solution. 
\end{thm}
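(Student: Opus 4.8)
The plan is to establish existence and uniqueness of a solution to the McKean–Vlasov equation \eqref{eq_general_mf} by a fixed-point argument on the space of path-measures equipped with a suitable Wasserstein metric, following the strategy of Lacker \cite{lacker2018strong} but adapted to the degenerate structure of our system (diffusion only on the $X$-component, no diffusion on $Y$). The key preliminary observation is that, because $\tilde b_1$ depends on the measure argument only through an averaging (Assumption \ref{ass:b_conditions}(i)), the nonlinearity is of \emph{convolution type} and hence is $\mathcal W_1$-Lipschitz in the measure variable: if $\mu,\nu \in \mathcal P(\R^d\times\R^d)$ then $|b_1(t,x,y,\mu)-b_1(t,x,y,\nu)| \le k_1 \mathcal W_1(\mu,\nu)$, uniformly in $(t,x,y)$. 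This is the crucial ingredient that lets a contraction argument go through despite the coefficients being unbounded and only one-sided-Lipschitz through the Lyapunov condition (iii).

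\medskip

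\noindent\textbf{Step 1 (a priori moment bounds).} First I would show that any solution, and more importantly the candidate solutions built along the iteration, have finite exponential moments propagated from the Gaussian-type tail assumption on $\mu_0$ (Assumption \ref{ass:initial_condition}). Using It\^o's formula on $|X_t|^{2p}$ and $|Y_t|^{2p}$ together with the Lyapunov bounds (iii), Gronwall's lemma gives $\sup_{t\le T}\E[|X_t|^{2p}+|Y_t|^{2p}] \le C^p\, p!$ for a constant $C$ depending on $T,k_1,k_3,\sigma,\kappa$ but not on $p$; this in particular guarantees all the $\mathcal W_1$-integrals below are finite and that the relevant subset of $\mathcal P(\mathcal C)$ is stable under the map.

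\medskip

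\noindent\textbf{Step 2 (the map $\Phi$ and its well-definedness).} Given $P \in \mathcal P(\mathcal C(\R^d\times\R^d))$ with suitable moments, let $m_s^P$ be its time-$s$ marginal on $\R^d\times\R^d$, and consider the \emph{decoupled} (now linear, but still degenerate and unbounded) SDE
\begin{equation*}
  X_t = X_0 + \int_0^t b_1(s,X_s,Y_s,m_s^P)\,ds + \sigma B_t,\qquad Y_t = Y_0 + \int_0^t b_2(s,X_s,Y_s)\,ds,
\end{equation*}
with $\mathcal L(X_0,Y_0)=\mu_0$. Strong existence and uniqueness for this system follow from the local Lipschitz property of $(x,y)\mapsto b_1(s,x,y,m_s^P)$ and $b_2$ together with the linear growth implied by (iii) (the $Y$-equation is a random ODE driven by the continuous path $X$, solved pathwise); set $\Phi(P)$ to be the law of $(X,Y)_{t\le T}$. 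Step 1 shows $\Phi$ maps the moment-constrained set into itself.

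\medskip

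\noindent\textbf{Step 3 (contraction).} For $P,Q$ in that set, couple the two decoupled systems with the same Brownian motion and same initial condition. Writing $\Delta X_t = X_t^P - X_t^Q$, $\Delta Y_t = Y_t^P - Y_t^Q$, the Brownian terms cancel, and using the $\mathcal W_1$-Lipschitz bound on $b_1$ in its measure argument, the spatial Lipschitz bounds on $\tilde b_1$ and $b_2$ \emph{on the (random, but moment-controlled) region where the solutions live}, plus Gronwall, one gets
\begin{equation*}
  \E\Big[\sup_{t\le T}\big(|\Delta X_t|+|\Delta Y_t|\big)\Big] \le C_T \int_0^T \mathcal W_1(m_s^P,m_s^Q)\,ds,
\end{equation*}
hence $\mathcal W_{1,\mathcal C}(\Phi(P),\Phi(Q)) \le C_T \int_0^T \mathcal W_1(m_s^P,m_s^Q)\,ds$. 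Iterating $\Phi$ and using the standard time-integration trick (the $n$-fold composition acquires a factor $C_T^n T^n/n!$), some power $\Phi^n$ is a strict contraction on the complete metric space, so Banach's fixed-point theorem yields a unique fixed point $\bar\PPP$; unwinding the definitions, $\bar\PPP$ is the unique solution of \eqref{eq_general_mf}. (Uniqueness in the full class, not just the moment-constrained one, follows because Step 1's a priori bounds force any solution into that class.)

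\medskip

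\noindent The main obstacle I anticipate is \textbf{Step 3's contraction estimate in the presence of unbounded, only locally Lipschitz coefficients}: the naive Gronwall argument wants a \emph{global} Lipschitz constant for $\tilde b_1(s,\cdot,\cdot)$ and $b_2$, which we do not have. The fix is to localise — stop at the exit time $\tau_R$ of a ball of radius $R$ for the coupled pair, run the contraction on the stopped processes where the Lipschitz constants are $O(R^2)$ (polynomial growth of derivatives is implied by the structure, e.g. the FitzHugh–Nagumo cubic), and then let $R\to\infty$ using the exponential moment bounds from Step 1 to show $\PPP(\tau_R \le T)$ decays fast enough that the localisation error is negligible. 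Making this quantitative — tracking how the $R$-dependent constants interact with the moment tails — is the technical heart of the proof; everything else is the routine Lacker-type machinery.
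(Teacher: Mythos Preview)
Your fixed-point strategy and Step~1 moment control are on the right track and mirror the paper's setup (the paper proves exactly your $p!\,C^p$ bound in its Lemma~\ref{lemma: p_estimate}). The gap is in Step~3. Your synchronous-coupling/Gronwall estimate requires Lipschitz control of $(x,y)\mapsto \tilde b_1(s,(x,y),(u,v))$, and the stated assumptions give you \emph{none}: Assumption~\ref{ass:b_conditions}(i) is Lipschitz only in the integration variable $(u,v)$, and (iii) is a one-sided Lyapunov bound, not a one-sided Lipschitz (monotonicity) condition. Your proposed fix---localise to radius $R$, use a local Lipschitz constant, then let $R\to\infty$---does not close: even in the FitzHugh--Nagumo case the local Lipschitz constant is $O(R^2)$, so Gronwall produces a factor $\exp(CR^2T)$ which is not killed by the sub-Gaussian tail $\exp(-cR^2)$ from Step~1. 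Under the general assumptions you cannot even assert polynomial growth of derivatives, so the constant could be worse.

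The paper avoids this obstacle entirely by replacing coupling with a Girsanov/entropy argument. For fixed $\bar\mu$, it constructs $P^{\bar\mu}$ as a change of measure from Wiener measure (proving the exponential martingale is a true martingale via a Lyapunov/non-explosion argument, Steps~1--2 of Section~\ref{sec: proof well posedness}). Then, crucially,
\[
\mathcal H_t\big(\Phi(\mu)\,\big|\,\Phi(\nu)\big)=\frac{1}{2\sigma^2}\,\E_{P^\mu}\Big[\int_0^t\big|b_1(s,X_s,Y_s,\mu_s)-b_1(s,X_s,Y_s,\nu_s)\big|^2 ds\Big],
\]
and the integrand is bounded by $k_1^2\,\mathcal W_1(\mu_s,\nu_s)^2$ \emph{uniformly in $(X_s,Y_s)$}---the unbounded $(x,y)$-dependence of $b_1$ cancels in the difference. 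This is exactly the $\mathcal W_1$-Lipschitz observation you made at the outset, but exploited through entropy rather than coupling. The loop is closed by a Talagrand-type transport inequality $\mathcal W_1\le k_5\sqrt{\mathcal H}$ (valid because of the sub-Gaussian moments from Step~1, via \cite{gozlan2010transport}), yielding the contraction for Banach's theorem. The moral: when the drift is unbounded in the state variable but the measure-dependence enters only through a globally Lipschitz kernel, Girsanov isolates precisely the part of the drift that is under control.
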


\subsubsection*{A Bernstein inequality}
We consider a system of $N$ interacting particles 
\begin{equation} \label{eq: canonical}
\big((X_t^1, Y_t^1),\dots,(X_t^N, Y_t^N)\big)_{0 \leq t \leq T},
\end{equation}
evolving in $\R^d \times \R^d$ that solves, for $1 \leq i \leq N, t \in[0, T]$, the system of stochastic differential equations
\begin{equation} \label{eq: system diff}
\left\{\begin{array}{l}
	d X_{t}^{i}=b_1(t,X_{t}^{i}, Y_t^i, \mu_{t}^{(N)}) d t+\sigma d B_{t}^{i},\\ \\
	d Y_{t}^{i}=b_2(t, X_{t}^{i},Y_t^i) d t, \\ \\
	\mathcal{L}\left(X_{0}^{1}, \ldots, X_{0}^{N}\right)=\mu_{0}^{\otimes N},
\end{array}\right.
\end{equation}
where 
$$\mu_{t}^{(N)}(dx,dy)=N^{-1} \sum_{i=1}^{N} \delta_{(X_{t}^{i},Y_t^i)}(dx,dy)$$ 
is the empirical measure of the particle system and 
$$b_1: [0,T] \times (\R^d\times\R^d)\times \mathcal{P}(\R^d) \rightarrow \R^d,\;\;b_2: [0,T] \times (\R^d \times \R^d)\rightarrow\R^d$$ satisfy Assumptions \ref{ass:initial_condition} and \ref{ass:b_conditions}, $\sigma >0$ and the $(B_{t}^{i})_{t \in[0, T]}$ are independent $\mathbb{R}^{d}$-valued Brownian motions.\\ 

Under Assumptions \ref{ass:initial_condition} and \ref{ass:b_conditions} the well-posedness of such a system is classical, see for example, in \cite{protter2013stochastic} or \cite{mao2007stochastic}. Equivalently, there exists a probability measure, denoted by $\mathbb{P}^N$ on $\mathcal C(\R^d \times \R^d)^N$, such that the canonical process, also written as in \eqref{eq: canonical}  is a solution to \eqref{eq: system diff}, in the sense that the $\sigma^{-1}\big(X_t^{i}-X_0^i-\int_0^t b_1(X_s^{i}, Y_s^i, \mu_{s}^{(N)}) d s\big)$ are $N$ independent $\R^d$-valued Brownian motions. 
Now, let $\mu_t(dx,dy)$ denote the flow of the marginal probability distributions of the canonical process in $\CC(\R^d \times \R^d)$ under  $\PPP$ that solves \eqref{eq_general_mf}. This flows is solution to the kinetic Fokker Planck equation
\begin{equation} \label{eq: fokker planck}
\left\{\begin{array}{l}
	\partial_{t} \mu_t+\operatorname{div}_x(b_1(t,x,y, \mu_t) \mu_t)+\operatorname{div}_y(b_2(t,x,y) \mu_t)=\frac{1}{2}\sigma^2\partial_{xx} \mu_t\\ \\
	\mu_{t=0}=\mu_{0}.
\end{array}
\right.
\end{equation}
Let $\rho(dt)$ denote an arbitrary probability measure in $[0,T]$. Let 
\begin{equation*}
	\nu^N(dt,dx,dy)=\mu_t^N(dx,dy)\otimes \rho(dt)
	\end{equation*}
	and
	\begin{equation*}
	\nu(dt,dz)=\mu_t(dx,dy)\otimes \rho(dt).
\end{equation*}
We have that $\mu_t^N$ is close to $\mu_t$ in the following sense: let $\phi:[0,T]\times (\R^{d} \times \R^d) \rightarrow\R$ denote a test function. Set
$$\mathcal E^N(\phi, \nu^N-\nu) = \int_{[0,T]\times (\R^{d}\times \R^d)}\phi(t,x,y)(\nu^{N}-\nu)(dt,dx,dy))$$
We write 
$$|\phi|^2_{L^2(\nu)} = \int_{[0,T] \times (\R^d \times \R^d)} |\phi(t,x,y)|^2\mu_t(dx,dy)\rho(dt)$$
and $|\phi|_\infty = \sup_{t,(x,y)}|\phi(t,x,y)|$ whenever these quantities are finite.

\begin{thm}\label{theo:Bernstein}
	Work under  Assumptions \ref{ass:initial_condition} and \ref{ass:b_conditions}. Then there exist $c_1,c_2>0$ such that
	\begin{equation}
	\label{eq:theo_Bernstein}
		\mathbb{P}^N\Big( \mathcal E^N(\phi, \nu^N-\nu) \geq \gamma\Big)\leq c_1\exp\Big(-c_2\dfrac{N\gamma^{2}}{|\phi|^2_{L^2(\nu)}+|\phi|_\infty\gamma}\Big),
		\end{equation}
for every $\gamma \geq 0$ and every $\phi:[0,T]\times (\R^{d} \times \R^d) \rightarrow\R$.
	Moreover, if $\phi$ is unbounded but satisfies an estimate of the form $|\phi(t,x,y)|\leq C_{\phi}|(x,y)|^k$ for some $C_{\phi}>0$ and $k>0$, there exists $c_3 >0$ such that
	\begin{equation}
	\label{eq:theo_Bernstein_bis}
		\mathbb{P}^N\Big( \mathcal E^N(\phi, \nu^N-\nu)  \geq \gamma\Big)\leq c_1\exp\Big(-c_3\dfrac{N\gamma^{2}}{C_\phi^2+C_{\phi}\gamma}\Big).
	\end{equation}
\end{thm}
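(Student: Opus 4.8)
The plan is to adapt the relative-entropy / Girsanov strategy of Della Maestra and Hoffmann \cite{della2020nonparametric} to the present degenerate (kinetic) setting, the key point being that the degeneracy of the $Y$-component does not obstruct the argument because the driving noise enters only the $X$-equation and the interaction is of convolution type in both variables. First I would introduce, on the path space $\CC(\R^d\times\R^d)^N$, the reference measure $Q^N := \bar\PPP^{\otimes N}$, the $N$-fold product of the McKean–Vlasov law $\bar\PPP$ from Theorem \ref{theo_Existence_Uniqueness}, and compute the Radon–Nikodym density $\frac{d\PPP^N}{dQ^N}$ via Girsanov's theorem. Because the $Y$-dynamics carries no noise and is identical (a functional ODE driven by the path), it contributes nothing to the density; the density involves only the drift discrepancy $\tilde b_1(s,(X_s^i,Y_s^i),\cdot)$ integrated against $\mu_s^{(N)}-\bar\mu_s$, where $\bar\mu_s$ is the common marginal law. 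Using the Lipschitz bound in Assumption \ref{ass:b_conditions}(i) one controls this discrepancy by $\mathcal W_1(\mu_s^{(N)},\bar\mu_s)$, and the Lyapunov bounds (iii) together with the Gaussian-type moment Assumption \ref{ass:initial_condition} give uniform exponential moment estimates on $|(X_t^i,Y_t^i)|$ — this is exactly where the "Lyapunov-like condition as in Wu \cite{wu2001large}" replaces boundedness.

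Next, the core of the proof is a change-of-measure inequality: for any event, $\PPP^N(A)$ is controlled by $Q^N(A)$ plus a relative-entropy penalty, quantitatively through the variational formula $\log Q^N(A) \le -\,\mathrm{something}$ — more precisely I would use the standard lemma that for $\lambda>0$,
\begin{equation*}
\PPP^N(A) \le \frac{\log 2 + H(\PPP^N\,|\,Q^N_{|A})}{\log(1/Q^N(A))} \quad\text{or, better, the Chernoff route below.}
\end{equation*}
Rather than going through entropy directly, the cleaner path (and the one that yields the Bernstein form) is: under $Q^N$ the random variables $(X^i,Y^i)$ are i.i.d. with marginal-flow $\bar\mu = (\mu_t)$, so $\mathcal E^N(\phi,\nu^N-\nu) = N^{-1}\sum_{i=1}^N \int_0^T \phi(t,X_t^i,Y_t^i)\rho(dt) - \E^{Q^N}[\cdots]$ is a centered i.i.d. average, and a classical Bernstein inequality for i.i.d. bounded (resp. polynomially-growing with exponential moments) summands gives precisely the right-hand side of \eqref{eq:theo_Bernstein} with $Q^N$ in place of $\PPP^N$. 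One then transfers this to $\PPP^N$ by an exponential change of measure: write $\PPP^N(\mathcal E^N\ge\gamma) = \E^{Q^N}[\mathbf 1_{\{\mathcal E^N\ge\gamma\}} \tfrac{d\PPP^N}{dQ^N}]$, apply Cauchy–Schwarz (or a tilted Hölder), and bound $\E^{Q^N}[(\tfrac{d\PPP^N}{dQ^N})^2]$ by a finite constant using the exponential moment control from the previous paragraph; this is where the constants $c_1,c_2$ pick up their dependence on $F,G,H,\sigma,\mu_0$.

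For the unbounded case \eqref{eq:theo_Bernstein_bis}, I would split $\phi = \phi\mathbf 1_{\{|(x,y)|\le R\}} + \phi\mathbf 1_{\{|(x,y)|>R\}}$, apply the bounded estimate \eqref{eq:theo_Bernstein} to the truncated part with $|\phi|_\infty \le C_\phi R^k$ and $|\phi|_{L^2(\nu)}^2 \le C_\phi^2 \int |(x,y)|^{2k}\mu_t(dx,dy)\rho(dt) \le C_\phi^2 \cdot \mathrm{const}$ (finite by the moment bounds), and control the tail part deterministically via the uniform exponential moment bound on $\sup_t|(X_t^i,Y_t^i)|$, optimizing over $R = R(\gamma,N)$; choosing $R$ of order $\log(N\gamma)$ absorbs the tail into the stated exponential rate. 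The main obstacle I anticipate is not conceptual but technical: establishing the \emph{uniform-in-$N$} exponential (Gaussian-type) moment bounds $\sup_N \E^{\PPP^N}\exp(\eta\sup_{t\le T}|(X_t^i,Y_t^i)|^2) < \infty$ for the kinetic system — the $Y$-equation has no regularizing noise, so these moments must be propagated purely through the Lyapunov inequalities (iii) and a Grönwall argument on the coupled $(X,Y)$ energy, carefully tracking that the mean-field interaction term is handled by exchangeability, and that the Girsanov density has the requisite square-integrability. Once that estimate is in hand, the rest is the by-now-standard machinery of \cite{della2020nonparametric}.
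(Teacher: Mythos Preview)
Your overall architecture matches the paper's: establish the Bernstein bound under the i.i.d.\ product law $Q^N=\overline{\PPP}^N=\bar\PPP^{\otimes N}$, then transfer to $\PPP^N$ by Girsanov. The paper does exactly this in its Steps~1--2.

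There is, however, a genuine gap in your transfer step. You propose to write $\PPP^N(\mathcal A)=\E_{Q^N}[\mathbf 1_{\mathcal A}\,\tfrac{d\PPP^N}{dQ^N}]$, apply Cauchy--Schwarz (or a single H\"older), and bound $\E_{Q^N}[(\tfrac{d\PPP^N}{dQ^N})^2]$ by a constant independent of $N$. That last bound is not available in one shot. The Girsanov bracket is
\[
\langle\bar M^N\rangle_T=\sigma^{-2}\sum_{i=1}^N\int_0^T\Big|\int_{\R^d\times\R^d}\tilde b_1\big(s,(X_s^i,Y_s^i),\cdot\big)\,d(\mu_s^N-\mu_s)\Big|^2\,ds,
\]
and after exchangeability and the Lipschitz bound in Assumption~\ref{ass:b_conditions}(i), the object in the exponent reduces to $\tfrac{1}{N}\big|\sum_{j}A^{1,j}_s\big|^2$ with the $A^{1,j}_s$ i.i.d.\ sub-Gaussian under $Q^N$. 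This quantity is sub-\emph{exponential} with parameter of order one, so $\E_{Q^N}[\exp(c\,\langle\bar M^N\rangle_T)]$ is finite uniformly in $N$ only when $cT$ lies below a fixed threshold. The paper resolves this by the iterated H\"older estimate over a time partition $0=t_0<\cdots<t_K=T$ borrowed from \cite{della2020nonparametric} and \cite{lacker2018strong}, obtaining
\[
\PPP^N(\mathcal A)\le \bar C^{K(K+1)/8}\,\overline{\PPP}^N(\mathcal A)^{1/4^K},
\]
where $\bar C$ bounds $\E_{Q^N}[\exp(2(\langle\bar M^N\rangle_{t_j}-\langle\bar M^N\rangle_{t_{j-1}}))]$ for increments of length $\le\delta_0$, with $\delta_0$ chosen small enough for the sub-exponential moment to converge. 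Your single Cauchy--Schwarz corresponds to $K=1$, which need not suffice for an arbitrary horizon $T$; the time-slicing is the essential device you are missing.

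Two smaller differences. First, for the unbounded case \eqref{eq:theo_Bernstein_bis} the paper does not truncate at radius $R$ and optimise; it uses the moment estimate of Lemma~\ref{lemma: p_estimate}, namely $\E_{Q^N}[|(X_t,Y_t)|^{2p}]\le (1+\sigma^2)\,p!\,C_2^{p}$, to verify the Bernstein moment condition $\sum_i\E[(Z_i)_+^q]\le \tfrac{q!}{2}\,v\,c^{q-2}$ directly under $Q^N$, and then applies the same change-of-measure transfer. This is both shorter and avoids the sup-in-time estimate your truncation would require. Second, the obstacle you single out---a uniform bound on $\sup_N\E^{\PPP^N}\exp(\eta\sup_{t\le T}|(X_t^i,Y_t^i)|^2)$---is not actually what is needed: the paper never controls the pathwise supremum. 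Fixed-time moment bounds (Lemma~\ref{lemma: p_estimate} and its Corollary) suffice, since $\rho(dt)$ is a probability measure and Jensen pushes the time integral inside before any exponential moment is taken.
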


%
Thje constants $c_1, c_2$ and $c_3$ depend on all the parameters of the model, namely $\kappa >0$ of Assumption \ref{ass:initial_condition}, $k_1, k_2$ and $k_3$ of Assumption \ref{ass:b_conditions}, as well as $\sigma$ and $T$. 	While in principle explicitly computable, there are far from being optimal.
Theorem \ref{theo:Bernstein} extend the result of \cite{della2020nonparametric} to accommodate locally Lipschitz coefficient and the position-velocity scheme of \eqref{eq_general_mf}. We also remark that the conclusion is slightly stronger as we allow the function $\phi$ to be unbounded with polynomial growth. 

\subsection{Statistical results}
\subsubsection*{Nonparametric oracle pointwise estimation of $\mu_t$} 
Under Assumption \ref{ass:initial_condition} and \ref{ass:b_conditions}, for every $t>0$, the probability solution of \eqref{eq_general_mf} is absolutely continuous with continuous density, {\it i.e.} 
$\mu_t(dx,dy) = \mu_t(x,y)dxdy$, where $(x,y) \mapsto \mu_t(x,y)$ is continuous, see {\it e.g.} \cite{konakov2010explicit}.
Assuming we observe the system \eqref{eq: canonical}, we can construct from $\mu_t^N(dx,dy)$ a nonparametric estimator of $\mu_{t_0}(x_0,y_0)$ for a fixed target $(t_0, x_0, y_0) \in (0,T] \times \R^d \times \R^d$.\\

Let  $K:\R^d \times \R^d\rightarrow \R$ be a bounded and compactly supported kernel functions, {\it i.e.} satisfying 
$$\int_{\R^{d} \times \R^d}K(x,y)dxdy=1.$$

For $h>0$ we denote,
$$K_h(x,y)=h^{-2d}K(h^{-1}x, h^{-1}y).$$
We construct a family of estimators of $\mu_{t_0}(x_0, y_0)$ depending on $h$ by setting
\begin{equation}\label{eq:oracle_estimator}
	\widehat \mu^N_h(t_0,x_0, y_0)=\int_{\R^{d} \times \R^d}K_h(x_0-x, y_0-y)\mu_{t_0}^N(dx, dy).
\end{equation} 

We fix $\left(t_{0}, x_{0}, y_0\right) \in(0, T] \times \mathbb{R}^{d} \times \R^d$ and a discrete set
$$
\mathcal{H}^{N} \subset \big[N^{-1 / d}(\log N)^{2 / d}, 1\big],
$$
of admissible bandwidths such that $\operatorname{Card}\left(\mathcal{H}^{N}\right) \lesssim N$. The algorithm, based on Lepski's principle, requires the family of estimators
$$
\left(\widehat{\mu}_{h}^{N}\left(t_{0}, x_{0}, y_0\right), h \in \mathcal{H}^{N}\right)
$$
obtained from \eqref{eq:oracle_estimator} and selects an appropriate bandwidth $\widehat{h}^{N}$ from data $\mu_{t_{0}}^{N}(d x, dy)$. Writing $\{x\}_{+}=$ $\max (x, 0)$, define

$$
\mathsf{A}_{h}^{N}=\max _{h^{\prime} \leq h, h^{\prime} \in \mathcal{H}^{N}}\left\{\left(\widehat{\mu}_{h}^{N}\left(t_{0}, x_{0},y_0\right)-\widehat{\mu}_{h^{\prime}}^{N}\left(t_{0}, x_{0},y_0\right)\right)^{2}-\left(\mathsf{V}_{h}^{N}+\mathsf{V}_{h^{\prime}}^{N}\right)\right\}_{+},
$$
where
\begin{equation}\label{eq:oracle_V}
	\mathsf{V}_{h}^{N}=\varpi|K|_{L^2(\R^d \times \R^d)}^{2}(\log N) N^{-1} h^{-d}, \varpi >0.  
\end{equation}
Now, let
\begin{equation}\label{eq:Oracle_h}
	\widehat{h}^{N} \in \operatorname{argmin}_{h \in \mathcal{H}^{N}}\left(\mathsf{A}_{h}^{N}+\mathsf{V}_{h}^{N}\right) .
\end{equation}

The data driven Goldenshluger-Lepski estimator of $\mu_{t_{0}}\left(x_{0}, y_0\right)$ is defined by
$$
\widehat{\mu}_{\mathrm{GL}}^{N}\left(t_{0}, x_{0}\right)=\widehat{\mu}_{\widehat{h}^{N}}^{N}\left(t_{0}, x_{0}, y_0\right)
$$
and is specified by $K$, $\varpi$ and the grid $\mathcal H^N$. Define
\begin{equation}\label{eq:oracle_B}
	\mathcal{B}_{h}^{N}(\mu)\left(t_{0}, x_{0}, y_0\right)=\sup _{h^{\prime} \leq h, h^{\prime} \in \mathcal{H}^{N}}\left|\int_{\mathbb{R}^{d}} K_{h^{\prime}}\left(x_{0}-x, y_0-y\right) \mu_{t_{0}}(x) d xdy-\mu_{t_{0}}\left(x_{0}, y_0\right)\right|.
\end{equation}

\begin{thm}[Oracle estimate]\label{theo:Oracle}
	Work under Assumptions \ref{ass:initial_condition} and \ref{ass:b_conditions}. Let $(t_0, x_0, y_0) \in (0, T ] \times R^{2} \times \R^d$. Then the following oracle inequality holds true:
	$$
	\mathbb{E}_{\mathbb{P}^{N}}\big[\big(\widehat{\mu}_{\mathrm{GL}}^{N}\left(t_{0}, x_{0},y_0\right)-\mu_{t_{0}}(x_{0},y_0)\big)^{2}\big] \lesssim \min _{h \in \mathcal{H}^{N}}\big(\mathcal{B}_{h}^{N}(\mu)\left(t_{0}, x_{0}\right)^{2}+\mathsf{V}_{h}^{N}\big),
	$$
	for large enough $N$, up to a constant depending on $\left(t_{0}, x_{0},y_0\right),|K|_{\infty}$ and $c_1, c_2$, provided $\widehat{\mu}_{\mathrm{GL}}^{N}\left(t_{0}, x_{0}\right)$ is calibrated with $\varpi_{1} \geq 16 c_{2}^{-1} C_\mu(t_0,x_0,y_0)$, where $c_2$ is the constant in Theorem \ref{theo:Bernstein} and $C_\mu(t_0,x_0,y_0)$ is specified in the proof.
\end{thm}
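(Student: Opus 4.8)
The plan is to run the classical Goldenshluger--Lepski argument, using Theorem~\ref{theo:Bernstein} --- applied with the (admissible) Dirac weight $\rho=\delta_{t_0}$ --- as the concentration input in place of the usual deviation bound for empirical processes.

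First I would establish a purely deterministic comparison inequality. Writing $\widehat\mu_h:=\widehat\mu^N_h(t_0,x_0,y_0)$ and $\mu:=\mu_{t_0}(x_0,y_0)$, a case analysis on whether $\widehat h^N\ge h$ or $\widehat h^N<h$ --- in each case combining the definition of $\mathsf A^N_{\cdot}$ with the minimality $\mathsf A^N_{\widehat h^N}+\mathsf V^N_{\widehat h^N}\le\mathsf A^N_h+\mathsf V^N_h$ and the monotonicity of $h\mapsto\mathsf V^N_h$ --- yields
$$(\widehat\mu_{\widehat h^N}-\mu)^2\le 4\,\mathsf A^N_h+4\,\mathsf V^N_h+2\,(\widehat\mu_h-\mu)^2,\qquad h\in\mathcal H^N,$$
so it remains to bound, for each fixed $h$, the expectation of each of the three terms and to minimise over $h$ at the end.

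Next I would split $\widehat\mu_h=\bar\mu_h+Z_h$, with $\bar\mu_h:=\int_{\R^d\times\R^d}K_h(x_0-x,y_0-y)\mu_{t_0}(x,y)\,dx\,dy$ and $Z_h:=\mathcal E^N(\phi_h,\nu^N-\nu)$ for $\rho=\delta_{t_0}$ and $\phi_h(t,x,y)=K_h(x_0-x,y_0-y)$. By definition of $\mathcal B^N_h(\mu)$ one has $|\bar\mu_h-\mu|\le\mathcal B^N_h(\mu)(t_0,x_0,y_0)$ and, for $h'\le h$, $|\bar\mu_h-\bar\mu_{h'}|\le 2\,\mathcal B^N_h(\mu)(t_0,x_0,y_0)$. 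For the fluctuation term I would feed into \eqref{eq:theo_Bernstein} the bounds $|\phi_h|_\infty=h^{-2d}|K|_\infty$ and, after the substitution $(x,y)\mapsto(x_0-hx,y_0-hy)$ and using that $(x,y)\mapsto\mu_{t_0}(x,y)$ is continuous (hence bounded on a fixed neighbourhood of $(x_0,y_0)$ determined by $\supp K$, since $h\le 1$),
$$|\phi_h|^2_{L^2(\nu)}=\int_{\R^d\times\R^d}K_h(x_0-x,y_0-y)^2\mu_{t_0}(x,y)\,dx\,dy\le C_\mu(t_0,x_0,y_0)\,h^{-2d},$$
with $C_\mu(t_0,x_0,y_0):=|K|^2_{L^2}\,\sup\{\mu_{t_0}(x,y):|(x,y)-(x_0,y_0)|\le\mathrm{diam}\,\supp K\}$; integrating the resulting Bernstein tail gives $\E[Z_h^2]\lesssim h^{-2d}N^{-1}$, which is of smaller order than $\mathsf V^N_h$, whence $\E[(\widehat\mu_h-\mu)^2]\lesssim\mathcal B^N_h(\mu)(t_0,x_0,y_0)^2+\mathsf V^N_h$.

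The main work, and the only delicate point, is the bound on $\E[\mathsf A^N_h]$. Writing $\widehat\mu_h-\widehat\mu_{h'}=(Z_h-Z_{h'})+(\bar\mu_h-\bar\mu_{h'})$, using $(a+b)^2\le 2(1+\delta)(Z_h^2+Z_{h'}^2)+C_\delta\,\mathcal B^N_h(\mu)(t_0,x_0,y_0)^2$ for a small fixed $\delta>0$, the elementary inequality $\{u+v-p-q\}_+\le\{u-p\}_++\{v-q\}_+$, and $\max_{h'\le h}(\cdot)\le\sum_{h'\in\mathcal H^N}(\cdot)$, one gets
$$\mathsf A^N_h\le C_\delta\,\mathcal B^N_h(\mu)(t_0,x_0,y_0)^2+\{2(1+\delta)Z_h^2-\mathsf V^N_h\}_++\sum_{h'\in\mathcal H^N}\{2(1+\delta)Z_{h'}^2-\mathsf V^N_{h'}\}_+.$$
For each $h'$ I would use $\E[\{2(1+\delta)Z_{h'}^2-\mathsf V^N_{h'}\}_+]=\int_{u_0}^{\infty}4(1+\delta)u\,\mathbb P^N(|Z_{h'}|>u)\,du$ with $u_0=(\mathsf V^N_{h'}/(2(1+\delta)))^{1/2}$ and invoke the Bernstein tail from the previous step: since $\mathsf V^N_{h'}$ is of order $(\log N)N^{-1}h'^{-2d}$, the factor $h'^{-2d}$ cancels and in the Gaussian regime the exponent at $u=u_0$ is at least $c_2\varpi(\log N)/\big(4(1+\delta)C_\mu(t_0,x_0,y_0)\big)$; the calibration $\varpi\ge 16\,c_2^{-1}C_\mu(t_0,x_0,y_0)$ (with $\delta$ small) then makes it $\ge(4-\epsilon)\log N$, so each term is $O\big(N^{-(4-\epsilon)}(Nh'^{2d})^{-1}\big)$ and, using $h'^{-2d}\le h_{\min}^{-2d}$ and $\mathrm{Card}(\mathcal H^N)\lesssim N$, the whole sum is $o(N^{-1})$, hence negligible next to $\min_{h\in\mathcal H^N}\mathsf V^N_h$. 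Collecting the three steps and minimising over $h\in\mathcal H^N$ yields the claimed oracle inequality for $N$ large enough. The hard part is precisely this last bookkeeping --- tracking how the local supremum $C_\mu(t_0,x_0,y_0)$ entering $|\phi_h|^2_{L^2(\nu)}$ propagates into the logarithmic power in the Bernstein exponent, and checking that the stated threshold on $\varpi$ makes the residual sum over the (up to $N$) competing bandwidths summable; everything else is the routine Lepski bias--variance balancing.
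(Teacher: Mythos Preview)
Your proposal is correct and follows essentially the same Goldenshluger--Lepski scheme as the paper: the deterministic reduction to $\mathbb{E}[\mathsf A_h^N]+\mathsf V_h^N+\mathcal B_h^N(\mu)^2$, the decomposition of $(\widehat\mu_h-\widehat\mu_{h'})^2$ into bias and fluctuation pieces, and the tail integration of the Bernstein bound from Theorem~\ref{theo:Bernstein} (with $\rho=\delta_{t_0}$) to show each $\mathbb{E}[\{c\,Z_{h'}^2-\mathsf V_{h'}^N\}_+]$ is $O(N^{-2})$ under the stated calibration of $\varpi$. The only cosmetic differences are that the paper reaches the deterministic comparison via $\mathsf A^N_{\max(\widehat h^N,h)}$ rather than your case split, uses the fixed constant $4$ where you carry $2(1+\delta)$, and defines $C_\mu(t_0,x_0,y_0)=\sup_{(x,y)\in(x_0,y_0)+\mathrm{Supp}(K)}\mu_{t_0}(x,y)$ without the factor $|K|_{L^2}^2$ (which it keeps separate in the exponent); the bookkeeping and the threshold on $\varpi$ are otherwise identical.
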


\subsubsection*{Moment estimation in the FitzHugh-Nagumo model and non-asymptotic deviations}

%
%
%
%
We reparameterise the FitzHugh-Nagumo model given in \eqref{FhNMVlimit} in order to obtain linear dependence on the parameters. We set $\bar{c}=\frac{1}{c}$, $\bar{a}=\frac{1}{c}a$ and $\bar{b}=\frac{1}{c}b$. The model \eqref{FhNMVlimit}  becomes  
\begin{equation}\label{FhNMVlimitbis}
	\left\lbrace\begin{array}{l}
		dX^i_{t}=(F(X^i_{t},Y^i_{t})-\frac{\lambda}{N}\sum_{j=1}^N(X^i_{t}-X^j_{t}))dt+\sigma dB_{t}^i,\\ \\
		dY^i_{t}=G(X^i_{t}, Y^i_{t})dt,
	\end{array} \right.
\end{equation}
for $1 \leq i \leq N$ and $t \in [0,T]$,
with
$$F(x,y)=x-\tfrac{1}{3}x^3-y+I,$$
$$G(x,y)=\bar{c}x+\bar{a}-\bar{b}y$$  
and parameters $a,I,c>0$ and $b\in\R$. The goal is to find an estimator of the parameter vector 
\begin{equation} \label{eq: def parameter}
\vartheta=(I, \bar{a},\bar{b},\bar{c}, \lambda,\sigma^2)
\end{equation}
based on averages quantities computed from $\mu_T^N$. Whenever they exist, define the additive and multiplicative moments of $\mu \in \mathcal P(\R^d \times \R^d)$ as
 \begin{equation}\label{eq:moment}
     a_k(\mu)=\int_{\R^{2}} (x^k+y^k)\mu(dx,dy)\;\;\text{and}\;\;m_{k,\ell}=\int_{\R^{2}}x^ky^\ell\mu(dx,dy),\;\;\text{for}\;\;k,\ell \geq 0. 
 \end{equation}
Now, given the solution $\mu_t(dx,dy)$ of \eqref{FhNMVlimit}, let us compute its the additive and multiplicative moments order $k$.
We have
\begin{align*}
   a_k(\mu_T) 
   &= a_k(\mu_0)+\int_0^T\int_{\R^2} \Big( k(x-\tfrac{1}{3}x^3-y+I)x^{k-1}\\
   &+k(\bar{c}x+\bar{a}-\bar{b}y)y^{k-1}+\tfrac{1}{2}\sigma^2 k(k-1)x^{k-2}\Big)\mu_t(dx,dy)\\
   &=a_k(\mu_0)+k\int_0^T\Big(m_{k,0}(\mu_t)-\tfrac{1}{3}m_{k+2,0}(\mu_t)-m_{k-1,1}(\mu_t)+Im_{k-1,0}(\mu_t)\Big)dt\\
   &+\bar{c}\int_0^T m_{1,k-1}(\mu_t)dt+\bar{a}\int_0^t m_{0,k-1}(\mu_t)dt-\bar{b}\int_0^T m_{0,k}(\mu_t)dt\\
   &-\lambda\int_0^T\big(m_{k,0}(\mu_t)+m_{1,0}(\mu_t)m_{k-1,0}(\mu_t)\big)dt +\tfrac{1}{2}\sigma^2k(k-1)\int_0^T m_{k-2,0}(\mu_t)dt.
\end{align*}	
By considering the first six additive moments $a_{ik}(\mu_T)$ for $k=1,\ldots, 6$, we obtain a linear system of six equations that enable us to identify the 6-dimensional parameter $\vartheta$ defined in \eqref{eq: def parameter}. In matrix formulation, we obtain the system
 $$A=M\vartheta+\Lambda,$$ where $A=(a_1(\mu_T) \cdots \, m_6(\mu_T))^\top$, the $k$-th row of $M$ being given by 
\begin{multline*}
	\Big(k\intT m_{k-1,0}(\mu_t)dt, k\intT m_{1,k-1}(\mu_t)dt, k\int m_{0,k-1}(\mu_t)dt, -k\intT m_{0,k}(\mu_t)dt,\\
	 -k\intT\big(m_{k,0}(\mu_t)+m_{1,0}(\mu_t)m^{k-1,0}(\mu_t)\big)dt,\tfrac{k(k-1)}{2}\intT m_{k-2,k}(\mu_t)dt \Big),
\end{multline*}
and the term $\Lambda=(\Lambda_1,\dots,\Lambda_6)$ is given  by 
$$\Lambda_k=a^k(\mu_0)+k\intT\big( m_{k,0}(mu_s)-\tfrac{1}{3}(m_{k+2,0}(\mu_s)+m_{k-1,1}(\mu_s))\big)ds$$
for $k=1,\ldots, 6$. Note that all moments are well defined, according to Lemma \ref{lemma: p_estimate}.

	It follows that $\vartheta$ is identified via the inversion formula $\vartheta=M^{-1}(A-\Lambda)$. Now, let $\mu^{N}_t$ denote the observed empirical measure and define $\widehat{A}_N, \widehat{M}_N$ and $\widehat{\Lambda}_N$ the associated approximations when replacing $a_k(\mu_s)$ and $m_{k, \ell}(\mu_s)$ by their empirical (observed) counterparts 
	$a_k(\mu_s^N)$ and $m_{k, \ell}(\mu_s^N)$. We obtain the following estimator of $\vartheta$
		\begin{equation} \label{eq: def esti param}
	    \widehat{\vartheta}_N=\widehat{M}_N^{-1}(\widehat{A}_N-\widehat{\Lambda}_N)
	\end{equation}

	\begin{thm}\label{theo:parameter}
Under Assumption \ref{ass:initial_condition}, let $\vartheta$ be the parameter vector corresponding to the FitzHugh-Nagumo model defined by equation \eqref{FhNMVlimit} and $\widehat{\vartheta}_N$ be defined as in \eqref{eq: def esti param} above. There exist $\zeta_1 = \zeta_1(c_1) >0$, $\zeta_2 = \zeta_2(c_3, |\Lambda|, |A|, |M|, |M^{-1}|) >0$ such that
$$\PPP^N\big(|\widehat \vartheta_N - \vartheta| \geq \gamma\big) \leq \zeta_1\exp\Big(-\zeta_2\frac{N\min(\gamma, 1)^2}{1+\max(\gamma, 1)}\Big),\;\;\text{for every}\;\;\gamma \geq 0.
$$
\end{thm}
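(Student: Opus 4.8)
\medskip
\noindent\emph{Proof strategy.} The plan is to reduce the statement to finitely many applications of the polynomial‑growth Bernstein bound \eqref{eq:theo_Bernstein_bis} of Theorem~\ref{theo:Bernstein}, combined with a deterministic perturbation analysis of the linear identity $\vartheta=M^{-1}(A-\Lambda)$. First I would write
\[
\widehat\vartheta_N-\vartheta=\widehat M_N^{-1}\big[(\widehat A_N-A)-(\widehat\Lambda_N-\Lambda)\big]+\widehat M_N^{-1}(M-\widehat M_N)M^{-1}(A-\Lambda),
\]
which, whenever $\widehat M_N$ is invertible, gives $|\widehat\vartheta_N-\vartheta|\le |\widehat M_N^{-1}|\big(|\widehat A_N-A|+|\widehat\Lambda_N-\Lambda|+|M^{-1}|\,|A-\Lambda|\,|\widehat M_N-M|\big)$. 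Each of the three scalar errors breaks up, entry by entry, into a number of elementary blocks not depending on $N$, of two kinds: (a) linear functionals $\mathcal E^N(\phi,\nu^N-\nu)$ with $\phi(t,x,y)$ a monomial in $(x,y)$ of degree $\le 8$ times a bounded function of $t$ — with $\rho=\delta_T$ for the $\widehat A_N$–blocks, $\rho=\delta_0$ for the initial–moment blocks $a^k(\mu_0^N)-a^k(\mu_0)$ of $\widehat\Lambda_N$, and $\rho(dt)\propto dt$ for the time–integrated blocks; and (b) the bilinear block $\int_0^T m_{1,0}(\mu_t^N)m_{k-1,0}(\mu_t^N)\,dt$ occurring in $\widehat M_N$. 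By Lemma~\ref{lemma: p_estimate} all the deterministic coefficients are finite and bounded in $t$, so each such $\phi$ admits a dominating $C_\phi$ with $|\phi(t,x,y)|\le C_\phi|(x,y)|^k$, uniformly over the finitely many blocks.

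The type‑(a) blocks are controlled at once by \eqref{eq:theo_Bernstein_bis} applied to $\pm\phi$, giving $\PPP^N\big(|\mathcal E^N(\phi,\nu^N-\nu)|\ge\delta\big)\le 2c_1\exp\big(-c_3N\delta^2/(C_\phi^2+C_\phi\delta)\big)$. For the bilinear block I would expand $\mu_t^N\otimes\mu_t^N$ around $\mu_t\otimes\mu_t$: with $P_t^N=m_{1,0}(\mu_t^N)$, $Q_t^N=m_{k-1,0}(\mu_t^N)$, $P_t,Q_t$ their limits, and $\mathcal E^N_t(\psi):=\int\psi\,d(\mu_t^N-\mu_t)$,
\[
P_t^NQ_t^N-P_tQ_t=(P_t^N-P_t)Q_t+P_t(Q_t^N-Q_t)+\mathcal E^N_t(x)\,\mathcal E^N_t(x^{k-1}).
\]
Integrated in $t$, the first two summands are again type‑(a) linear functionals (with bounded $t$–dependent coefficients $Q_t,P_t$), while the residual $\int_0^T\mathcal E^N_t(x)\,\mathcal E^N_t(x^{k-1})\,dt$ is bounded via Cauchy--Schwarz by $\int_0^T\mathcal E^N_t(\psi)^2\,dt$, which splits into a diagonal term $N^{-1}\mathcal E^N(\tilde\psi^2,\nu^N)$ — of order $N^{-1}$ once the type‑(a) bound for $\tilde\psi^2$ holds — plus an off–diagonal bilinear term of the same order whose exponential‑in‑$N$ deviations are again extracted from Theorem~\ref{theo:Bernstein}. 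The net effect is a finite family of monomials $\{\phi_j\}$ and a scale $\delta'=\delta'(\delta)\asymp\min(\delta,1)$ such that $\{\text{all scalar errors}\le\delta\}\supseteq\bigcap_j\{|\mathcal E^N(\phi_j,\nu^N-\nu)|\le\delta'\}$.

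To conclude, if all scalar errors are $\le\epsilon$ with $C\epsilon\le\tfrac12|M^{-1}|^{-1}$, then a Neumann series gives $|\widehat M_N^{-1}|\le2|M^{-1}|$ and hence $|\widehat\vartheta_N-\vartheta|\le C'\epsilon$ with $C'$ depending only on $|M|,|M^{-1}|,|\Lambda|,|A|$. Taking $\epsilon(\gamma)=\min\big(\gamma/C',\,(2C|M^{-1}|)^{-1}\big)\asymp\min(\gamma,1)$ yields $\{|\widehat\vartheta_N-\vartheta|>\gamma\}\subseteq\bigcup_j\{|\mathcal E^N(\phi_j,\nu^N-\nu)|>\delta'(\epsilon(\gamma))\}$ with $\delta'(\epsilon(\gamma))\asymp\min(\gamma,1)$; a union bound over the finitely many $j$ and \eqref{eq:theo_Bernstein_bis}, together with the crude estimate $C_\phi^2+C_\phi\delta'\le\tfrac12(\bar C^2+\bar C)(1+\max(\gamma,1))$ valid since $\delta'\le1$, give the announced bound with $\zeta_1$ proportional to $c_1$ and $\zeta_2$ proportional to $c_3(\bar C^2+\bar C)^{-1}$.

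I expect the main obstacle to be exactly the bilinear term in $\widehat M_N$: it is not a linear functional of $\nu^N-\nu$, so Theorem~\ref{theo:Bernstein} does not apply to it directly, and the second–order residual $\int_0^T\mathcal E^N_t(x)\,\mathcal E^N_t(x^{k-1})\,dt$ must still be given exponential‑in‑$N$ deviation bounds — crucially, no sub‑step may be absorbed into a mere polynomial moment estimate, since only the $O(N)$ rate in the exponent of \eqref{eq:theo_Bernstein_bis} matches the target. A secondary point, already visible in the $|M^{-1}|$–dependence of $\zeta_2$, is that the truncation at $\gamma=1$ is structural: it encodes the constraint $C\epsilon(\gamma)\le\tfrac12|M^{-1}|^{-1}$ keeping the empirical moment matrix $\widehat M_N$ away from singularity.
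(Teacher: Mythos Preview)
Your overall strategy coincides with the paper's: perturb the identity $\vartheta=M^{-1}(A-\Lambda)$, control $\widehat M_N^{-1}$ via a Neumann series on the event $\{|M^{-1}(\widehat M_N-M)|\le\tfrac12\}$, and reduce everything to finitely many applications of the polynomial-growth Bernstein bound~\eqref{eq:theo_Bernstein_bis}. The paper uses the algebraically equivalent decomposition
\[
\widehat\vartheta_N-\vartheta=(\widehat M_N^{-1}-M^{-1})(\widehat A_N-\widehat\Lambda_N)+M^{-1}\big[(\widehat A_N-A)-(\widehat\Lambda_N-\Lambda)\big],
\]
then bounds $|\widehat A_N-\widehat\Lambda_N|$ on a good event to absorb it into constants; your version, which keeps the deterministic factor $M^{-1}(A-\Lambda)$, is slightly cleaner but leads to the same endpoint.

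You have in fact been \emph{more} careful than the paper on the one genuinely delicate point: the bilinear entry $\int_0^T m_{1,0}(\mu_t^N)\,m_{k-1,0}(\mu_t^N)\,dt$ of $\widehat M_N$. The paper simply asserts $|\widehat M_N-M|\le C\max_{k,\ell}|\mathcal E_N(x^ky^\ell,\nu^N-\nu)|$ and applies Theorem~\ref{theo:Bernstein} directly, which glosses over the fact that this entry is \emph{not} a linear functional of $\nu^N-\nu$. Your expansion into two linear pieces plus the quadratic residual $\int_0^T\mathcal E^N_t(x)\,\mathcal E^N_t(x^{k-1})\,dt$ is the right diagnosis.

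That said, your proposed handling of the residual is not yet complete. After the diagonal/off-diagonal split, the off-diagonal piece $N^{-2}\sum_{i\neq j}\int_0^T(X_t^i-P_t)\big((X_t^j)^{k-1}-Q_t\big)\,dt$ is a degenerate $U$-statistic, and Theorem~\ref{theo:Bernstein} as stated applies only to linear functionals of $\nu^N-\nu$; the sentence ``exponential-in-$N$ deviations are again extracted from Theorem~\ref{theo:Bernstein}'' is not justified. To make this rigorous you would need either (i) a separate exponential inequality for degenerate $U$-statistics under $\overline{\PPP}^N$ (e.g.\ of Gin\'e--Lata{\l}a--Zinn or Houdr\'e--Reynaud-Bouret type), followed by the same Girsanov change-of-measure step as in the proof of Theorem~\ref{theo:Bernstein}, or (ii) a pointwise-in-$t$ version of the Bernstein bound combined with a union bound over a finite time grid and a path-continuity argument to control $\sup_t|\mathcal E^N_t(\psi)|$. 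Either route is workable and preserves the $O(N)$ rate in the exponent, but neither is a direct consequence of Theorem~\ref{theo:Bernstein}.
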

In particular, we have the tightness under $\PPP^N$ of the sequence $\sqrt{N}(\widehat \vartheta_N-\vartheta)$.




\section{Proofs}

\subsection{Preparation for the proofs} \label{sec: preparation}
Our preliminary observation is that, given a continuous function \( X_\cdot: [0, T] \rightarrow \mathbb{R}^d \), the ordinary differential equation
\begin{equation}\label{eq:YfromX}
\left\{
\begin{array}{l}
	dY_t = b_2(t, X_t, Y_t)dt\\ \\
	Y_0 = y_0 \in \R^d
	\end{array}
	\right.
\end{equation}
has a unique solution that does not explode in finite time since $b_2$ is globally Lipschitz by Assumption \ref{ass:b_conditions} (ii). Now, if $X_\cdot$ is a continuous random process,
the random variable \( Y_t \) can be represented as a nonanticipative functional of the path $(X_s)_{s \in [0,t]}\) of $X_\cdot$ up to time \( t \). We will sometimes use the notation \( Y_t(X_{[0,t]}) \) to indicate this dependence explicitly. As a consequence, equation \eqref{eq_general_mf} can be reformulated in terms of \( X_\cdot \) solely.\\

More precisely, write $\mu_{(X_{[0,T]},Y_{[0,T]})}$ for the probability distribution on $\mathcal C(\R^d \times \R^d)$ that solves the McKean-Vlasov equation \eqref{eq_general_mf}, {\it i.e.} $\mu_{(X_{[0,T]},Y_{[0,T]})} = \overline \PPP$ on the canonical space. The remark above implies that, for any (bounded) $\phi: \mathcal C(\R^d \times \R^d) \rightarrow \R$, we have
\begin{align*}
\E_{ \overline \PPP}\big[\phi((X_t)_{t \in [0,T]}, (Y_t)_{t \in [0,T]})\big]  & = \int_{\mathcal C(\R^d \times \R^d)}\phi(x_\cdot,y_\cdot) \mu_{(X_{[0,T]},Y_{[0,T]})}(dx_\cdot,dy_\cdot) \\
& =  \int_{\mathcal C(\R^d \times \R^d)}\phi(x_\cdot,y_\cdot) \delta_{Y_\cdot(x_{[0,\cdot]})}(dy_{\cdot})\mu_{X_{[0,T]}}(dx_\cdot) \\
& = \int_{\mathcal C(\R^d)}\phi\big(x_\cdot,Y_\cdot(x_{[0,\cdot]})\big)\mu_{X_{[0,T]}}(dx_\cdot),
\end{align*}
where $(Y_t(x_{[0,t]}))_{t \in [0,T]}$ is the solution to \eqref{eq:YfromX} with $X_\cdot = x_\cdot$ and $\mu_{X_{[0,T]}}(dx_\cdot)$ is a probability distribution on $\mathcal C(\R^d)$ which coincides with the law of $(X_t)_{t \in [0,T]}$.\\

We need some estimates before proving the main theorems.

\begin{lemma}\label{lemma:b_2}
Let $R>0$. 	Assume $\sup_{t \in [0,T]}|X_t|\leq R$ and let $Y_t = Y_t(X_{[0,t]})$ be the solution to \eqref{eq:YfromX}. Then there exists an explicitly computable $C>0$ depending on $y_0$, $R, T, b_2(0,X_0,y_0)$ and the Lipschitz constant $k_2$ of Assumption \ref{ass:b_conditions}-(ii), such that $\sup_{t \in [0,T]}|Y_t|\leq C$.
\end{lemma}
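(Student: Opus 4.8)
The plan is to integrate the ODE $dY_t = b_2(t,X_t,Y_t)dt$ directly and use the Lipschitz bound on $b_2$ to obtain a Gronwall-type control. First I would write, for $t \in [0,T]$,
$$Y_t = y_0 + \int_0^t b_2(s,X_s,Y_s)\,ds,$$
so that $|Y_t| \leq |y_0| + \int_0^t |b_2(s,X_s,Y_s)|\,ds$. The next step is to bound the integrand: by Assumption \ref{ass:b_conditions}-(ii), writing $b_2(s,X_s,Y_s) = b_2(0,X_0,y_0) + \big(b_2(s,X_s,Y_s) - b_2(0,X_0,y_0)\big)$, we get
$$|b_2(s,X_s,Y_s)| \leq |b_2(0,X_0,y_0)| + k_2\big(s + |X_s - X_0| + |Y_s - y_0|\big) \leq |b_2(0,X_0,y_0)| + k_2\big(T + 2R + |y_0|\big) + k_2|Y_s|,$$
using $|X_s| \leq R$ and $|X_0| \leq R$ (noting $X_0 = \lim_{s\to 0} X_s$ so $|X_0| \leq R$ as well).

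Plugging this in, I obtain an integral inequality of the form
$$|Y_t| \leq A + k_2 \int_0^t |Y_s|\,ds,\qquad A := |y_0| + T\big(|b_2(0,X_0,y_0)| + k_2(T + 2R + |y_0|)\big).$$
Applying Gr\"onwall's inequality yields $|Y_t| \leq A\, e^{k_2 t} \leq A\, e^{k_2 T}=:C$ for all $t \in [0,T]$, which is the desired bound with $C$ explicitly depending only on $y_0$, $R$, $T$, $b_2(0,X_0,y_0)$ and $k_2$. Taking the supremum over $t$ gives $\sup_{t\in[0,T]}|Y_t| \leq C$.

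There is essentially no serious obstacle here: the only point requiring a little care is the reference point used to convert the Lipschitz estimate into an affine bound in $|Y_s|$ — one must anchor $b_2$ at a known value (here $(0,X_0,y_0)$) rather than leaving $b_2(s,X_s,Y_s)$ abstract, and one must absorb the linear-growth term $k_2|Y_s|$ into the Gr\"onwall loop rather than the constant. One should also confirm that the solution $Y_\cdot$ exists on all of $[0,T]$ (no finite-time blow-up), but this is already granted by the discussion following \eqref{eq:YfromX}, so the a priori estimate above is consistent. The constant $C$ can be read off explicitly from the computation as $C = e^{k_2 T}\big(|y_0| + T(|b_2(0,X_0,y_0)| + k_2(T + 2R + |y_0|))\big)$.
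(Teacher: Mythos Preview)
Your proof is correct and follows essentially the same approach as the paper's own proof: anchor $b_2$ at the reference point $(0,X_0,y_0)$, use the Lipschitz bound from Assumption~\ref{ass:b_conditions}(ii) to obtain an affine control in $|Y_s|$, and conclude with Gr\"onwall's lemma. The only difference is cosmetic --- you carry the computation through to an explicit constant, whereas the paper stops after stating the integral inequality.
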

\begin{proof}
	The estimate is a straightforward consequence of 
	\begin{equation*}
		|Y_t|\leq |Y_0|+\int_0^t |b_2(s,X_s,Y_s)|ds\leq |y_0|+\int_0^t \big(k_2(s+2\sup_{0 \leq s \leq T}|X_s|+|Y_s|+|y_0|)+|b_2(0,X_0,y_0)|\big)ds
	\end{equation*}
	together with Gronwall's lemma. 
\end{proof}
%

An important consequence of Assumptions \ref{ass:initial_condition} and \ref{ass:b_conditions}-(iii) is a specific bound for the second moment of the solution of the following SDE, to be used later in Section \ref{sec: proof well posedness} for the proof of Theorem \ref{theo_Existence_Uniqueness}. Let $\bar \mu \in \mathcal P(\mathcal C(\R^d))$ and consider temporarily the stochastic differential equation
\begin{equation}\label{eq:temporary}
	\left\{
	\begin{array}{l}
		dX_t=b_1(t, X_t, Y_t,\bar{\mu}_t)dt+\sigma dB_t, \\ \\
		dY_t=b_2(t, X_t, Y_t)dt,\\ \\
		\mathcal{L}(X_0,Y_0)= \mu_0,
	\end{array} 
	\right.
\end{equation}
 where $\mu_0, b_1$ and $b_2$ satisfy Assumptions \ref{ass:initial_condition} and \ref{ass:b_conditions}. 
\begin{lemma} \label{lem: estimation flot moment 2}
	There exist a function $C(t):[0,T]\rightarrow [0,\infty)$, depending only on $k_3, \sigma$ and $\mu_0$, such that if $$\intRRd|(x,y)|^2\bar{\mu}_t(dx,dy)\leq C(t),$$ for all $0\leq t\leq T$, then 
	$$\intRRd|(x,y)|^2\mu_t(dx,dy)\leq C(t),$$
	where $\mu_t$ is the law of $(X_t,Y_t)$ solution to \eqref{eq:temporary} (whenever it exists).
\end{lemma}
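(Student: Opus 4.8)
The plan is to apply Itô's formula to $t\mapsto |X_t|^2+|Y_t|^2=|(X_t,Y_t)|^2$, take expectations so that the stochastic integral vanishes, control the drift by Assumption~\ref{ass:b_conditions}-(iii), and then \emph{define} $C(t)$ as the solution of the linear ODE that comes out, chosen precisely so that the apparently circular hypothesis and conclusion are compatible.

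Since we do not know a priori that $\int_{\R^d\times\R^d}|(x,y)|^2\mu_t(dx,dy)=\E\big[|(X_t,Y_t)|^2\big]$ is finite, I would first localise: set $\tau_R=\inf\{t\geq 0:|X_t|+|Y_t|\geq R\}$ and $u_R(t)=\E\big[|(X_{t\wedge\tau_R},Y_{t\wedge\tau_R})|^2\big]\leq R^2<\infty$. The phrase ``whenever it exists'' means the solution of \eqref{eq:temporary} is defined on all of $[0,T]$, hence $\tau_R\uparrow\infty$ as $R\to\infty$. Itô's formula---the bracket of the $d$-dimensional martingale $\sigma B$ contributing the additive constant $\sigma^2 d$ ($d$ the ambient dimension), and $Y$ having finite variation---gives, after taking expectations so that the bounded stochastic integral drops out,
\begin{equation*}
u_R(t)=u_R(0)+\E\int_0^{t\wedge\tau_R}\big(2X_s^\top b_1(s,X_s,Y_s,\bar\mu_s)+2Y_s^\top b_2(s,X_s,Y_s)+\sigma^2 d\big)\,ds.
\end{equation*}

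Next I would bound the drift using Assumption~\ref{ass:b_conditions}-(iii): since $b_1(s,x,y,\mu)=\int\tilde b_1(s,(x,y),(u,v))\mu(du,dv)$, integrating the pointwise bound gives $x^\top b_1(s,x,y,\bar\mu_s)\leq k_3\big(1+|(x,y)|^2+m(s)\big)$ with $m(s)=\int_{\R^d\times\R^d}|(u,v)|^2\bar\mu_s(du,dv)$, while $y^\top b_2(s,x,y)\leq k_3(1+|(x,y)|^2)$. Using $\mathbf 1_{\{s<\tau_R\}}|(X_s,Y_s)|^2\leq |(X_{s\wedge\tau_R},Y_{s\wedge\tau_R})|^2$ and the hypothesis $m(s)\leq C(s)$, this yields
\begin{equation*}
u_R(t)\leq c_0+\int_0^t\big(4k_3\,u_R(s)+2k_3\,C(s)+4k_3+\sigma^2 d\big)\,ds,\qquad c_0:=\int_{\R^d\times\R^d}|(x,y)|^2\mu_0(dx,dy).
\end{equation*}
I would then take $C$ to be the explicit, non-exploding solution of $C'(t)=6k_3\,C(t)+4k_3+\sigma^2 d$ with $C(0)=c_0$, that is, $C(t)=c_0+\int_0^t\big(4k_3\,C(s)+2k_3\,C(s)+4k_3+\sigma^2 d\big)ds$. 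Subtracting this identity from the previous inequality gives $u_R(t)-C(t)\leq 4k_3\int_0^t\big(u_R(s)-C(s)\big)ds$ with $u_R(0)-C(0)\leq 0$, so Gronwall's lemma forces $u_R(t)\leq C(t)$ for every $t\in[0,T]$ and every $R$. Letting $R\to\infty$ and applying Fatou's lemma gives $\int_{\R^d\times\R^d}|(x,y)|^2\mu_t(dx,dy)=\E[|(X_t,Y_t)|^2]\leq C(t)$, and $C$ depends only on $k_3$, $\sigma$ (and the fixed dimension $d$) and on $\mu_0$ through $c_0$, as required.

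The only genuine subtlety is the self-referential shape of the statement: the \emph{same} function $C$ bounds the input moment $m$ in the hypothesis and the output moment in the conclusion, which is exactly what forces the constant $6k_3$ in the ODE for $C$ (it must absorb both the $4k_3$ multiplying the ``own'' moment $u_R$ and the $2k_3$ multiplying the ``input'' moment $C$); everything else is routine localisation and Gronwall bookkeeping. One should also record that $c_0<\infty$---this is where Assumption~\ref{ass:initial_condition} is used---so that the conclusion is non-vacuous.
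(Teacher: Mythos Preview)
Your proof is correct and follows essentially the same approach as the paper: derive a Gronwall-type inequality for the second moment and then \emph{define} $C(t)$ as the solution of the linear ODE that makes the comparison close. The only cosmetic difference is that the paper works on the PDE side (differentiating $\int|(x,y)|^2\mu_t$ via the Fokker--Planck equation) whereas you work on the SDE side via It\^o's formula with localisation; your version is in fact slightly more careful, since the localisation justifies the finiteness and differentiability of $u$ that the paper takes for granted.
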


\begin{proof}
	The flow of probability measures $(\mu_t)_{t \in [0,T]}$ solves the Fokker-Planck equation
	\begin{equation}\label{eq:mu}
		\partial_t \mu_t= -\nabla ((b_1,b_2)\mu_t)+\tfrac{1}{2}\sigma^2\partial_{xx}\mu_t,
	\end{equation}
	from where we obtain, integrating by part, 
	\begin{align*}
		\dfrac{d}{dt}\int_{\Rd\times\Rd}|(x,y)|^2\mu_t(dx,dy)&=\int_{\Rd\times\Rd}2(x,y)^\top (b_1,b_2)(t,x,y, \bar{\mu}_t)\mu_t(dx,dy)+\sigma^2\int_{\Rd\times\Rd}\mu_t(dx,dy)\\
		&=\int_{\Rd\times\Rd}2x^\top \intRRd \widetilde b_1(t,(x,y), (u,v))\bar{\mu}_t(du,dv)\mu_t(dx,dy)+2k_3+\sigma^2\\
		&\leq2k_3\intRRd\intRRd (|(x,y)|^2+|(u,v)|^2)\bar{\mu_t}(du,dv)\mu_t(dx,dy)+\sigma^2\\
		&=2k_3\intRRd|(x,y)|^2\mu_t(dx,dy)+2k_3C(t
		)+2k_3+\sigma^2,
	\end{align*}
	where we have used Assumptions \ref{ass:b_conditions}-(i),(iii). Setting $u(t)=\intRRd|(x,y)|^2\mu_t(dx,dy)$, $C_1=2k_3+\sigma^2$, we obtain the inequality
	\begin{equation}\label{eq:Aux_u}
		u^\prime(t)\leq C_1(u(t)+C(t)+1),
	\end{equation}
	We look for $C(t)$ such that \eqref{eq:Aux_u} implies $u(t)\leq C(t)$. This is satisfied for instance with $C(t)=(M+\frac{1}{2})\exp(2Mt)-\frac{1}{2}$
	and $M=\max(C_1,u(0))$.
\end{proof}

Define
\begin{equation*}
	\Xi=\Big\{\mu\in \PP\big(\CC(\R^d \times \R^d)\big),\intRRd|(x,y)|^2\mu_t(dx,dy)\leq C(t),\ \forall t\in[0,T]\Big\}.
\end{equation*}

By Lemma \ref{lem: estimation flot moment 2}, we have in particular that for every
$\mu \in\Xi$, 
	\begin{equation} \label{eq: estimation b}
		(x,y)^\top (b_1,b_2)(t,(x,y),\mu_t)\leq k_3(1+|(x,y)|^2+C(t))\leq k_3(1+|(x,y)|^2+C(T)).
	\end{equation}
Also, the set $\Xi$ is closed in $\PP(\CC(\R^d \times \R^d))$ for the $\mathcal W_1$ metric defined in \eqref{eq:Wasserstein}. 
Indeed, let $\mu^n\in \Xi$ converge to $\mu$. Then, for every $t\in [0,T]$
 $\mu_{t}^n$ converge weakly to $\mu_t$: 
\begin{equation*}
	\int_{\R^d \times \R^d} \varphi(x,y)\mu^n_t(dx,dy)\rightarrow\int_{\R^d \times \R^d}\varphi(x,y)\mu_t(dx,dy),
\end{equation*}
for any continuous and compactly test function $\varphi$. Taking, for $r>0, $ $\varphi = \varphi_r=\chi((x,y)/r)\geq 0$ with $\chi(x,y)=1$ for all $|(x,y)|\leq 1$ and $\chi(x,y)=0$ for all $|(x,y)|>2r$ we obtain
\begin{equation*}
	\int_{\R^d \times \R^d} |(x,y)|^2\varphi_r\mu^n_t(dx,dy)\rightarrow \int_{\R^d \times \R^d} |(x,y)|^2\varphi_r\mu_t(dx,dy),
\end{equation*}
which implies that $\int_{\R^d \times \R^d} |(x,y)|^2\varphi_r\mu_t(dx,dy) \leq C(t)$. The conclusion follows by letting $r \rightarrow \infty$ and Fatou lemma.


\begin{lemma}\label{lemma: p_estimate}
	Work under Assumptions \ref{ass:initial_condition} and \ref{ass:b_conditions}. Let $\bar \mu \in \Xi$. There exists $C_2>0$ depending only on $T, k_3, C(1)$ and $\kappa$ (see Assumption \ref{ass:initial_condition}) such that for all $p\geq 2$, we have
	\begin{equation}
		\mathbb{E}_{\PPP^{\bar \mu}}\big[|(X_t,Y_t)|^p\big]\leq (1+\sigma^2)(p/2)!C_2^{p/2},
	\end{equation}
	where $\PPP^{\bar \mu}$ denotes the solution of \eqref{eq:temporary} (whenever it exists).
	\end{lemma}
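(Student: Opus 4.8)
The plan is to control $V_t:=|X_t|^2+|Y_t|^2$ under $\PPP^{\bar\mu}$, exploiting that the one‑sided bounds of Assumption \ref{ass:b_conditions}-(iii) (together with $\bar\mu\in\Xi$, via \eqref{eq: estimation b}) make the drift of $V_t$ grow only linearly in $V_t$. First I would apply It\^o's formula: writing $M_t=2\sigma\int_0^t X_s^\top dB_s$ (a continuous local martingale with $d\langle M\rangle_t=4\sigma^2|X_t|^2\,dt\le 4\sigma^2V_t\,dt$), one gets
\begin{equation*}
	dV_t\le (a\,V_t+b)\,dt+dM_t,
\end{equation*}
where $a>0$ depends only on $k_3$ and $b>0$ only on $k_3,\sigma,d$ and $C(T)$. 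Applying It\^o again to $V_t^n$ for an integer $n\ge 1$, localising at $\tau_R=\inf\{t:|(X_t,Y_t)|\ge R\}$ to discard the martingale part, taking expectations and letting $R\to\infty$ (the solution of \eqref{eq:temporary} being non‑explosive on $[0,T]$ whenever it exists, so $V_{t\wedge\tau_R}\to V_t$ and Fatou applies), one obtains the recursive family of differential inequalities
\begin{equation*}
	u_n'(t)\le n a\,u_n(t)+n\big(b+2(n-1)\sigma^2\big)\,u_{n-1}(t),\qquad u_n(t):=\E_{\PPP^{\bar\mu}}\!\big[V_t^n\big],
\end{equation*}
with $u_0\equiv 1$; the same localisation also gives $\sup_{t\le T}u_n(t)<\infty$ for each $n$, which legitimates the above.

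\textbf{Solving the recursion uniformly in $n$.} The key is \emph{not} to bound $u_{n-1}$ by a constant and run Gronwall level by level — that would extract a factor $e^{naT}=(e^{aT})^n$ at each step, forcing the final constant to depend on $n$. Instead I would introduce the solution $g$ of the scalar linear ODE $g'(t)=a\,g(t)+(b+2\sigma^2)$ with $g(0)=g_0$, i.e.\ $g(t)=e^{at}g_0+\tfrac{b+2\sigma^2}{a}(e^{at}-1)$, which is increasing, so $g(t)\le g(T)=:g^\ast$ on $[0,T]$. Then $\bar u_n(t):=K\,n!\,g(t)^n$ is a supersolution of the recursion: a direct computation gives $\bar u_n'(t)=n a\,\bar u_n(t)+n^2(b+2\sigma^2)\,\bar u_{n-1}(t)$, and $n^2(b+2\sigma^2)\ge n\big(b+2(n-1)\sigma^2\big)$ for every $n\ge1$ (the difference equals $n[(n-1)b+2\sigma^2]\ge0$), so $\bar u_n'\ge n a\,\bar u_n+n(b+2(n-1)\sigma^2)\bar u_{n-1}$; moreover $\bar u_n(0)=K\,n!\,g_0^n\ge u_n(0)=\E[V_0^n]$ once $K$ and $g_0$ are chosen large, which is possible by Assumption \ref{ass:initial_condition} (bounding $\E[|X_0|^{2n}]$, and likewise the $Y_0$-contribution, by a quantity of the form $K_0\,n!\,c_0^{\,n}$). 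An induction on $n$ together with the scalar comparison principle — using $u_{n-1}\le\bar u_{n-1}$ to dominate the source term — yields $u_n(t)\le K\,n!\,(g^\ast)^n$ for all $t\in[0,T]$ and all integers $n\ge1$.

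\textbf{From integers to all $p\ge2$.} For real $p\ge2$ put $q=p/2$ and $n=\lceil q\rceil$; Jensen's inequality gives $\E[V_t^{q}]\le\E[V_t^{n}]^{q/n}\le\big(K\,n!\,(g^\ast)^n\big)^{q/n}$, and the elementary estimates $K^{q/n}\le K$, $(n!)^{q/n}\le n!\le 2^{q}\,\Gamma(q+1)$ lead to
\begin{equation*}
	\E_{\PPP^{\bar\mu}}\!\big[|(X_t,Y_t)|^{p}\big]=\E_{\PPP^{\bar\mu}}\!\big[V_t^{p/2}\big]\le (1+\sigma^2)\,\Gamma\!\big(\tfrac p2+1\big)\,C_2^{\,p/2},
\end{equation*}
with $C_2:=2K g^\ast$, depending only on $T$, $k_3$, $C(T)$ (hence on $C(1)$ and $\sigma$) and $\kappa$; the prefactor $1+\sigma^2$ (indeed $K$ alone) leaves ample room.

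\textbf{Expected main obstacle.} The only genuinely delicate point is the bookkeeping of the last two steps: one must carry the time–dependence of the lower moments through the \emph{shape function} $g(t)^n$ — equivalently, observe that the $n$‑th moment of $V_t$ is controlled by the $n$‑th power of a bound for its first moment, up to the combinatorial factor $n!$ (the sub‑exponential/sub‑Gaussian heuristic) — so that the unavoidable $e^{naT}$ growth is absorbed \emph{inside} $C_2^{\,n}=(g^\ast)^n$ rather than appearing as a separate $n$‑dependent constant; a crude step‑by‑step Gronwall estimate does not close. The remaining ingredients — the It\^o expansions, the localisation, the comparison principle, and verifying that Assumption \ref{ass:initial_condition} supplies the required initial moment bound in both coordinates — are routine.
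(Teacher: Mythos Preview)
Your approach is correct but takes a genuinely different route from the paper. The paper applies It\^o's formula \emph{directly} to $(1+|Z_t|^2)^{p/2}$ for general $p$, uses Young's inequality $\frac{p-1}{2}(1+|Z_s|^2)^{(p-2)/2}\le (1+|Z_s|^2)^{p/2}+\frac{2}{p}(p/2)^{p/2}$ to absorb the quadratic--variation term, localises, and then runs a \emph{single} Gronwall inequality whose exponent is $cpT$ with $c=k_3(1+C(T))$ independent of $p$. Since $e^{cpT}=(e^{2cT})^{p/2}$, this factor is absorbed straight into $C_2^{p/2}$ with no recursion at all. Your construction --- the recursion $u_n'\le na\,u_n+n(b+2(n-1)\sigma^2)u_{n-1}$ and the supersolution $\bar u_n=K\,n!\,g(t)^n$ --- is a valid and rather elegant alternative that carries the time dependence through the shape function $g(t)^n$, but it is more elaborate than necessary: the concern you flag (that a na\"{\i}ve step--by--step Gronwall would pile up an extra $e^{aT}$ at each level) is real for a recursion, yet the paper sidesteps it entirely by never recursing. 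What your approach buys is that it avoids the Young--inequality trick and makes the combinatorial origin of the $n!$ fully transparent; what the paper's approach buys is brevity.

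One small discrepancy worth noting: in your construction the constant $b$ --- and hence $g^\ast$ and $C_2=2Kg^\ast$ --- carries a $d\sigma^2$ term coming from the It\^o correction in $dV_t$, so your $C_2$ depends on $\sigma$. The statement, however, isolates the $\sigma$--dependence in the prefactor $(1+\sigma^2)$ alone, and the paper's proof achieves this because the Gronwall constant $c=k_3(1+C(T))$ is $\sigma$--free while the $\sigma^2(p/2)^{p/2}$ contribution from Young's inequality sits additively in front and is bounded by $\sigma^2(p/2)!\,e^{1/2}$. This is cosmetic rather than a gap, but if you want $C_2$ exactly as stated you would need to reorganise where the $d\sigma^2$ term lands.
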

The proof is classical, yet we present here a version from Mao \cite[Chap. 2 Theo 4.1]{mao2007stochastic} that is well adapted to our setting.
\begin{proof}
	Abbreviating $Z_t = (X_t,Y_t)$, $b=(b_1,b_2)$ and applying It\^o's formula, we have
	\begin{align*}
		(1+|Z_t|^2)^{\frac{p}{2}}&=(1+|Z_0|^2)^{\frac{p}{2}}+p \intt (1+|Z_s|^2)^{\frac{p-2}{2}} Z_s^{\top} b(s,Z_s,\Bar{\mu}_s)ds\\
		&+\sigma^2\frac{p}{2}\intt (1+|Z_s|^2)^{\frac{p-2}{2}}ds+\sigma^2\frac{p(p-2)}{2}\intt(1+|Z_s|^2)^{\frac{p-4}{2}}|Z_s|^2ds\\
		&+p\intt (1+|Z_s|^2)^{\frac{p-2}{2}}Z_s^\top \sigma dB_s\\
		&\leq2^{\frac{p-2}{2}}(1+|Z_0|^p)+p\intt \big(1+|Z_s|^2)^{\frac{p-2}{2}}(Z_s^\top b(s, Z_s,\bar{\mu}_s)+\sigma^2\tfrac{p-1}{2}\big)ds\\
		&+p\intt (1+|Z_s|^2)^{\frac{p-2}{2}}Z_s^\top \sigma dB_s.
	\end{align*}
	By Young's inequality we have $\frac{p-1}{2}(1+|Z_s|^2)^{\frac{p-2}{2}}\leq (1+|Z_s|^2)^{\frac{p}{2}}+2\frac{(p/2)^{\frac{p}{2}}}{p}$. In turn, using the estimate \eqref{eq: estimation b} we obtain
	\begin{align*}
		(1+|Z_t|^2)^{\frac{p}{2}}
		&\leq 2^{\frac{p-2}{2}}(1+|Z_0|^p)+p\intt (1+|Z_s|^2)^{\frac{p-2}{2}} k_3(1+C(T)+|Z_s|^2))ds\\
		&+\intt (p(1+|Z_s|^2)^{\frac{p}{2}}+2\sigma^2(p/2)^{\frac{p}{2}})ds+p\intt (1+|Z_s|^2)^{\frac{p-2}{2}}Z_s^\top \sigma dB_s \\
		& \leq 2^{\frac{p-2}{2}}(1+|Z_0|^p)+cp\intt (1+|Z_s|^2)^{\frac{p}{2}}  + 2\sigma^2(p/2)^{\frac{p}{2}}T+p\intt (1+|Z_s|^2)^{\frac{p-2}{2}}Z_s^\top \sigma dB_s 
	\end{align*}
	with $c = k_3(1+C(T))$ that does not depend on $p$.
	For $n\geq 0$, let us define the sequence of localising stopping times 
	\begin{equation*}
		\tau_n=\inf \big\{t \geq 0, |Z_t|\geq n \big\} \wedge T.
	\end{equation*}
	From the non-explosion of the solution, as follows for instance by Lemma \ref{lem: estimation flot moment 2}, we have $\tau_n\rightarrow T$ almost-surely. Taking expectation, 
	\begin{align*}
		 \mathbb{E}_{\PPP^{\bar \mu}}\big[(1+|Z_{t\wedge \tau_n}|^2)^{\frac{p}{2}}\big] 
		&\leq 2^{\frac{p-2}{2}}(1+\mathbb{E}_{\PPP^{\bar \mu}}\big[|Z_0|^p\big])+cp\mathbb{E}_{\PPP^{\bar \mu}}\Big[\int_0^{t\wedge\tau_n} (1+|Z_s|^2)^{\frac{p}{2}}ds\Big]+2\sigma^2(p/2)^{\frac{p}{2}}T \\
		&\leq 2^{\frac{p-2}{2}}(1+\mathbb{E}_{\PPP^{\bar \mu}}\big[|Z_0|^p\big])+cp\mathbb{E}_{\PPP^{\bar \mu}}\Big[\int_0^{t} (1+|Z_{s\wedge\tau_n}|^2)^{\frac{p}{2}}ds\Big]+2\sigma^2 (p/2)^{\frac{p}{2}}T.
	\end{align*}
		Applying Gronwall's lemma, we obtain
	\begin{align*}
		\mathbb{E}_{\PPP^{\bar \mu}}\big[(1+|Z_{t\wedge \tau_n}|^2)^{\frac{p}{2}}\big]
		& \leq \big(2^{\frac{p-2}{2}}(1+\mathbb{E}_{\PPP^{\bar \mu}}\big[|Z_0|^p\big])+2\sigma^2(p/2)^{\frac{p}{2}}\big)\exp(cpT) \\
		&\leq \big(2^{\frac{p-2}{2}}(1+\mathbb{E}_{\PPP^{\bar \mu}}\big[|Z_0|^p\big])+2\sigma^2(p/2)!\exp(1/2)\big)\exp(cpT) 
	\end{align*}
	and the result follows by Assumption \ref{ass:initial_condition} and letting $n\rightarrow \infty$.
\end{proof}
In consequence, we have the next corollary. 
\begin{cor}  \label{cor: moment exp flow}
	Work under Assumptions \ref{ass:initial_condition} and \ref{ass:b_conditions} We have
	\begin{equation*}
		\sup_{t \in [0,T]} \mathbb{E}_{\PPP^{\bar \mu}}\Big[\exp\big(\frac{1}{2C_2}|(X_t, Y_t)|^2\big)\Big]\leq 2+\sigma^2.
	\end{equation*}
\end{cor}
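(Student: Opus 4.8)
The plan is to recognise the stated bound as the standard equivalence between the factorial moment growth $\mathbb{E}[|Z_t|^p]\lesssim (p/2)!\,C_2^{p/2}$ supplied by Lemma \ref{lemma: p_estimate} and the finiteness of a Gaussian-type exponential moment, and to make this precise by expanding the exponential into its power series and integrating term by term. Writing $Z_t=(X_t,Y_t)$, for each fixed $t\in[0,T]$ one has the pointwise identity
$$
\exp\Big(\tfrac{1}{2C_2}|Z_t|^2\Big)=\sum_{n=0}^{\infty}\frac{1}{n!\,(2C_2)^n}|Z_t|^{2n},
$$
all of whose terms are nonnegative, so Tonelli's theorem allows interchanging $\mathbb{E}_{\PPP^{\bar \mu}}$ with the sum:
$$
\mathbb{E}_{\PPP^{\bar \mu}}\Big[\exp\big(\tfrac{1}{2C_2}|Z_t|^2\big)\Big]=\sum_{n=0}^{\infty}\frac{1}{n!\,(2C_2)^n}\,\mathbb{E}_{\PPP^{\bar \mu}}\big[|Z_t|^{2n}\big].
$$

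Next I would bound the terms. The $n=0$ term equals $1$. For $n\geq 1$ I apply Lemma \ref{lemma: p_estimate} with $p=2n\geq 2$, which gives $\mathbb{E}_{\PPP^{\bar \mu}}[|Z_t|^{2n}]\leq (1+\sigma^2)\,n!\,C_2^{n}$; substituting this in, the $n$-th term is at most $(1+\sigma^2)\,2^{-n}$. Summing the geometric series $\sum_{n\geq 1}2^{-n}=1$ yields
$$
\mathbb{E}_{\PPP^{\bar \mu}}\Big[\exp\big(\tfrac{1}{2C_2}|Z_t|^2\big)\Big]\leq 1+(1+\sigma^2)=2+\sigma^2,
$$
and since the bound does not depend on $t$, taking the supremum over $t\in[0,T]$ finishes the argument.

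There is essentially no genuine obstacle here: the only points deserving a word of justification are the termwise integration, which is immediate from Tonelli because the series has nonnegative terms, and the fact that Lemma \ref{lemma: p_estimate} is stated for $p\geq 2$ only, which forces one to peel off the $n=0$ term by hand. The constant $1/(2C_2)$ in the exponent is chosen precisely so that the geometric ratio comes out to be $1/2$, so no further optimisation of constants is required.
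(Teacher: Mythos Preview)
Your proof is correct and follows essentially the same approach as the paper: expand the exponential as a power series, apply Lemma~\ref{lemma: p_estimate} termwise to get $(1+\sigma^2)2^{-n}$ for each $n\geq 1$, and sum the geometric series to obtain $2+\sigma^2$. Your write-up is in fact slightly more careful, making explicit the use of Tonelli and the separate treatment of the $n=0$ term.
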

\begin{proof}
	By Lemma \ref{lemma: p_estimate}, for $p\geq 1$, we have
	$$
		 \mathbb{E}_{\PPP^{\bar \mu}}\Big[\exp\big(\frac{1}{2C_2}|(X_t, Y_t)|^2\big)\Big]=1+\sum_{p\geq1}\frac{2^{-p}}{p!C_2^p} \mathbb{E}_{\PPP^{\bar \mu}}\big[|(X_t, Y_t)|^{2p}|\big]\leq 2+\sigma^2\sum_{p\geq1}2^{-p}=2+\sigma^2.
	$$
	\end{proof}
By Corollary \label{cor: moment exp flow} and  Proposition 6.3 of \cite{gozlan2010transport}, it follows that there is a constant $k_5>0$ that only depends on $C_2$ and $\sigma$ such that for any measures $\mu$ and $\nu$ in $\Xi$, the following estimate holds true:
\begin{equation} \label{eq: crucial entropy}
	\mathcal W_1(\mu_t,\nu_t)\leq k_5\sqrt{\HH_t(\mu|\nu)},\quad \forall t\in[0,T] .
\end{equation}
where $\mu_t=\mu_{\cdot \wedge t}$, $\nu_t=\nu_{\cdot \wedge t}\in \mathcal P(\CC(\R^d \times \R^d))$ and $\HH_t(\mu|\nu)=\HH(\mu_t|\nu_t)$ denote the relative entropy
\begin{equation*}
	\HH(\mu|\nu)=\int_{\mathcal{C}(\R^d \times \R^d)}\dfrac{d\mu}{d\nu}\log\dfrac{d\mu}{d\nu}d\nu.
\end{equation*}




\subsection{Proof of Theorem \ref{theo_Existence_Uniqueness}} \label{sec: proof well posedness}
The proof is based on an argument derived from Girsanov's theorem in a similar way as in Lacker \cite{lacker2018strong}. However, our result provides with an extension extends to unbounded coefficients that are only locally Lipschitz and accomodates for a certain kind of degeneracy in the diffusion part.\\

\noindent 	\textit{Step 1)} Let $\bar{\mu}\in \Xi$ be fixed. Under Assumptions \ref{ass:initial_condition} and \ref{ass:b_conditions}, according to the classical theory of SDE's, see {\it e.g.} the textbook \cite{protter2013stochastic}, there exists a unique probability $\PPP^{\bar \mu}$ on $\mathcal C(\R^d \times \R^d)$ such that the canonical process $(X_t,Y_t)_{t \in [0,T]}$ solves \eqref{eq:temporary} up to an explosion time  $\mathfrak{T}$. Since  $Y_t$ is unequivocally determined by $X_{[0, t]}$, recall \eqref{eq:YfromX} from Section \ref{sec: preparation} we write this probability measure $\PPP^{\bar \mu} = P^{\bar{\mu}}\otimes\delta_{Y(X_{[0,\cdot]})}$, with $P^{\bar{\mu}} \in \PP(\CC(\R^d))$. 
	Define
	\begin{equation*}
		\tau_R=\inf \big\{ t\geq 0, |X_t|\geq R\big\},\;\;R>0.
	\end{equation*}
	By lemma \ref{lemma:b_2}, there exists $C_R >0$ such that $|Y_t|\leq C_R$ for $t \in [0,\tau_R]$. This implies in particular that $\tau=\sup_{R>0}\tau_R\leq\mathfrak{T}$. Notice that $\tau_R$ is a stopping time with respect to the natural filtration $(\FF_t)_{t \in [0,T]}$ of $(X_t)_{t \in [0,T]}$. Define 
	\begin{equation*}
		f_R(x)=\left\lbrace \begin{array}{cc}
			x& \text{if}\ |x|\leq R  \\
			R\tfrac{x}{|x|} & \text{if} \ |x|>R,
		\end{array}\right.
	\end{equation*}
	and
	$$b_{1R}(t,X_t,Y_t, \bar{\mu})=b_1(t,f_R(X_t),Y_t(f_R(X_{[0,t]})),\bar{\mu}).$$

	Let $\PPP$ be the probability measure on the canonical space $\mathcal C(\R^d)$ such that $W_t = \sigma^{-1}X_t$ is a $(\mathcal F)_t$-standard Brownian motion under $\PPP$.	
	We have
	$$
		\mathbb{E}\Big[\exp\Big(\tfrac{1}{2}\int_0^T |\sigma^{-1}b_{1R}(t,X_t, Y_t(X_{\cdot\wedge t}),\bar{\mu})|^2 dt\Big)\Big]<\infty
	$$
since $b_{1R}$ is bounded by construction, hence, by Novikov's criterion, writing $\mathcal E_t(M) = \exp(M_t-\tfrac{1}{2}\langle M\rangle_t)$ for the exponential of a continuous local martingale $M_t$ which in turn is a local martingale, the process 
	\begin{equation}\label{eq:MR}
		M_t^R=\mathcal{E}_t\Big( \int_0^\cdot \sigma^{-1}b_{1R}(s,X_s,Y_s(X_{[0,s]}),\bar{\mu}) dW_s\Big)
	\end{equation}
	is a true martingale under $\PPP$.
	It follows that $M_{t\wedge\tau_R}^R$ is also a true martingale and so is $M_{t\wedge\tau_R}$ since both processes coincide on $[0,\tau_R]$.
	By Girsanov theorem, introducing the probability measure  
 \begin{equation*}
		\PPP_R\big|_{\mathcal{F}_{t\wedge\tau_R}}=
		M_{t\wedge\tau_R} \cdot \PPP\;\; \hbox{on} \;\; (\CC,\FF_T), 
	\end{equation*}
	we have that $W^{\bar \mu}_{t\wedge\tau_R}=W_{t\wedge\tau_R}-\int_0^{t\wedge\tau_R}\sigma^{-1}b_1(s,X_s,Y_s,\bar{\mu})ds$ is a standard Brownian motion under $\PPP_R$. By uniqueness of the solution on $[0,\mathfrak{T})$ we derive $P^{\bar{\mu}}|_{\mathcal{F}_{t\wedge\tau_R}}=\PPP_R|_{\mathcal{F}_{t\wedge\tau_R}}=M_{t\wedge\tau_R}\cdot \PPP$.\\ 
	
\noindent {\it Step 2)} For every $h\in C^2(\R^d \times \R^d)$ define 
	\begin{equation*}
		\mathcal{L}_t h(x,y)=\tfrac{1}{2} \sigma^2\partial_{xx}h(x,y)+b(t,x,y,\bar \mu)\cdot\nabla h(x,y).
	\end{equation*}
	We look for a function $f \in \mathcal C^2(\R^d \times \R^d)$ such that:
	\begin{itemize}
		\item[(i)] $\mathcal{L}_tf\leq C f$,
		\item[(ii)] $f(Z_{\tau_{R}})\geq g(R)$ for some $g$ such that $g(R)\rightarrow\infty$ when $R\rightarrow\infty$.
	\end{itemize}
	We can then proceed in a similar way as in \cite{wu2001large} to conclude that $\lim_{R\rightarrow\infty}P^{\bar{\mu}}(\tau_R>t)=1$ for all $t\geq 0$. 
	Let 
	$$f(x,y)=\tfrac{1}{2}(1+|(x,y)|^2).$$
	By the estimate \eqref{eq: estimation b} in Section \ref{sec: preparation}, we have  $\mathcal{L}_tf\leq C f$ for $C=2(\sigma ^2+k_3)$.
	Applying It\^o's formula to $\mathrm{e}^{-Ct}f(X_t,Y_t)$ between $s$ and $t$, we obtain
	\begin{align*}
		\mathrm{e}^{-Ct}f(X_t,Y_t)&=\mathrm{e}^{-Cs}f(X_s,Y_s)+\int_s^t \mathrm{e}^{-Cu}\big(\mathcal{L}_u f(X_u, Y_u)du-Cf(X_u, Y_u)\big)du\\
		&+\sigma \int_s^t\nabla f(X_u, Y_u) dW_u^{\bar \mu}\\
		&\leq \mathrm{e}^{-Cs}f(X_s,Y_s)+\sigma \int_s^t\nabla f(X_u, Y_u) dW_u^{\bar \mu}.
	\end{align*}
	Replacing $s$ and $t$ by $s\wedge\tau_R$ and $t\wedge\tau_R$, taking  conditional expectation  with respect to $\mathcal{F}_s$, we obtain
	\begin{equation*}
		\mathbb{E_{P^{\bar{\mu}}}}[e^{-Ct\wedge\tau_R}f(X_t,Y_t)|\mathcal{F}_s]\leq e^{-Cs\wedge\tau_R}f(X_s,Y_s).
	\end{equation*}
	since the first term is $\mathcal{F}_s$-measurable and the second is a martingale with respect to $P^{\bar{\mu}}$. From the fact that $e^{-Ct\wedge\tau_R}f(X_t,Y_t)$ is a supermartingale, we infer
	\begin{equation*}
		\mathbb{E_{P^{\bar{\mu}}}}[e^{-Ct\wedge\tau_R}f(X_t,Y_t)]\leq \mathbb{E_{P^{\bar{\mu}}}}[f(X_0,Y_0)].
	\end{equation*}
	Since  $f(Z_{\tau_{R}})\geq g(R)=R^2/2$ on $\{\tau_R<\infty\}$, we get
	\begin{align*}
		P^{\bar{\mu}}(\tau_R\leq t)&=\frac{\mathrm{e}^{Ct}}{g(R)}\mathbb{E}_{P^{\bar{\mu}}}[{\bf 1}_{\{\tau_R\leq t\}}\mathrm{e}^{-Ct}g(R)] \\
		& \leq \frac{\mathrm{e}^{Ct}}{g(R)}\mathbb{E}_{P^{\bar{\mu}}}[{\bf 1}_{\{\tau_R\leq t\}}\mathrm{e}^{-Ct\wedge\tau_R}f(Z_{t\wedge\tau_R})] \\
		&\leq   \frac{\mathrm{e}^{Ct}}{g(R)}\mathbb{E}_{P^{\bar{\mu}}}[\mathrm{e}^{-Ct\wedge\tau_R}f(Z_{t\wedge\tau_R})]\\
		&\leq \frac{\mathrm{e}^{Ct}}{g(R)} f(X_0, Y_0),
	\end{align*}
	and this last term converges to $0$ when $R$ grows to infinity. From $\lim_{R \rightarrow \infty} P^{\bar{\mu}}(\tau_R>t)=1$ we conclude
	$$\E_{\PPP}\big[M_t\big] \geq \E_{\PPP}\big[ 1_{\tau_R>t}M_{t\wedge\tau_R}]=P^{\bar{\mu}}(\tau_R>t)\rightarrow 1$$
	as $R \rightarrow \infty$. This implies that $M_t$ is a $\PPP$-martingale. By Girsanov theorem again, we conclude $P^{\bar{\mu}}|_{\mathcal{F}_t}=M_t \cdot \PPP$ and that $W^{\bar{\mu}}_{t}=W_{t}-\int_0^{t}\sigma^{-1}b_1(s,X_s,Y_s,\bar{\mu})ds$ is a $P^{\bar \mu}$-Brownian motion.\\
	
	\noindent {\it Step 3)} Define $\Phi : \Xi \rightarrow  \mathcal P\big(\mathcal C(\R^d \times \R^d)\big)$ via
	$$\Phi(\mu)= \PPP^{\mu} = P^{\mu}\otimes\delta_{Y(X_{[0,\cdot]})}$$
	and $\Phi(\mu)_t =  P^{\mu}\big|_{\mathcal F_t}\otimes\delta_{Y(X_{[0,t]})}$.
 Let $\mathcal A \in \mathcal F_t$. We have
	\begin{align*}
	\int_{\mathcal A}\dfrac{d\Phi(\nu)_t}{d\Phi(\mu)_t}(x,Y(x_{[0,t]}))P^{\mu}(dx) & =\int_{\mathcal A\times\CC(\R^d)}\dfrac{d\Phi(\nu)_t}{d\Phi(\mu)_t}(x,y)\Phi(\nu)_t(dx,dy) \\
	& =\Phi(\mu)_t(\mathcal A \times \mathcal C(\R^d)) \\
	&=P^{\mu}(\mathcal A).
	\end{align*}
	It follows that 
	\begin{equation*}
		\dfrac{dP^{\nu}}{dP^{\mu}}\big|_{\mathcal F_t}(x)=\dfrac{d\Phi(\nu)_t}{d\Phi(\mu)_t}(x,Y(x_{[0,t]})).
	\end{equation*}
	As a consequence
	\begin{align*}
		\mathcal H_t(\Phi(\mu)|\Phi(\nu))&=-\int_{\CC(\R^d \times \R^d)}\log \dfrac{d\Phi(\nu)_t}{d\Phi(\mu)_t}d\Phi(\mu)_t \\
		&=-\int_{\CC(\R^d)}\log \dfrac{d\Phi(\nu)_t}{d\Phi(\mu)_t}(x_{[0,t]},Y(x_{[0,t]}))P^{\mu}(dx)\\
		&=-\mathbb{E}_{P^{\mu}}\left[\log\dfrac{dP^{\nu}_t}{dP^\mu_t} (X_{[0,t]})\right]\\
		&=-\mathbb{E}_{P^{\mu}}\left[\log\mathbb{E}_{P^\mu}\left[\dfrac{dP^{\nu}_t}{dP^\mu_t} |\FF_{\cdot\wedge t}\right]\right] \\
		&= \frac{1}{2\sigma^2}\mathbb{E}_{P_\mu}\left[\int_0^t|b_1(s,X_s, Y_s,\nu)-b_1(s,X_s, Y_s,\mu)|^2ds\right]
	\end{align*}
	using that the processes $(X_t)_{t \in [0,T]}$ and $(W_t)_{t \in [0,T]}$ generate the same filtration and Step 2).
		On the other hand, by Assumption \ref{ass:b_conditions}-(i), we have
	\begin{align*}
		|b_1(s,(x,y),\nu)-b_1(s,(x,y),\mu)| &= \big|\int_{\R^d \times \R^d}\tilde{b}_1(t,(x,y),(u,v))d(\mu-\nu)(du,dv)\big| \\
		&\leq k_1\sup_{|\varphi|_{\mathrm{Lip}}\leq 1}\int_{\R^d \times \R^d}\varphi(u,v) d(\mu-\nu)(du,dv) \\
		&=k_1 \mathcal W_1(\mu,\nu).
	\end{align*}
	Combined with the crucial estimate \eqref{eq: crucial entropy} from the preliminary Section \ref{sec: preparation}, we derive 
	\begin{equation*}
		\mathcal W_1(\Phi(\mu),\Phi(\nu))\leq k_1k_5 \sqrt{\mathcal H_t(\Phi(\mu)|\Phi(\nu))}\leq k_1 k_5 \sqrt{\frac{1}{2|\sigma|^2 t}}\mathcal W_1(\mu,\nu).
	\end{equation*}
	We use Banach's fixed point theorem to conclude. The proof of Theorem \ref{theo_Existence_Uniqueness} is complete.
	
	

\subsection{Proof of Theorem \ref{theo:Bernstein}}
The proof extends the result of \cite[Theorem 18]{della2020nonparametric} employing the same strategy of a change of probability argument using Girsanov's theorem.\\ 

\noindent {\it Step 1)} Let us first recall Bernstein's inequality, based for instance on  \cite[Theorem 2.10 and Corollary 2.11]{boucheron2013concentration} : let $Z_1, \ldots, Z_N$ be independent real-valued random variables, for which there exists $v,c>0$ such that  
$$\sum_{i=1}^N \E[Z_i^2]\leq v\;\;\text{and}\;\;\sum_{i=1}^N \E[(Z_i)_+^q]\leq \frac{q!}{2}vc^{q-2},\;\;\text{for every}\;\;q\geq 3.
$$
Then
\begin{equation} \label{eq: bernstein classic}
\PPP\Big(\sum_{i = 1}^N (Z_i-\E[Z_i]) \geq \gamma\Big) \leq \exp\Big(-\frac{\gamma^2}{2(v+c\gamma)}\Big),\;\;\text{for every}\;\;\gamma \geq 0.
\end{equation}
On the canonical space $\mathcal C(\R^d \times \R^d)^N$ consider the probability measure $\overline{\PPP}^N$ such that if 
$$\big((X_t^1, Y_t^1),\dots,(X_t^N, Y_t^N)\big)_{0 \leq t \leq T}$$
denotes the canonical process on $\mathcal C(\R^d \times \R^d)^N$, then we have
\begin{equation} \label{eq: system diff lim}
\left\{\begin{array}{l}
	d X_{t}^{i}=b_1(t,X_{t}^{i}, Y_t^i, \mu_{t}) d t+\sigma d B_{t}^{i},\\ \\
	d Y_{t}^{i}=b_2(t, X_{t}^{i},Y_t^i) d t, \\ \\
	\mathcal{L}\left(X_{0}^{1}, \ldots, X_{0}^{N}\right)=\mu_{0}^{\otimes N},
\end{array}\right.
\end{equation}
where the $(B_t^i)_{t \in [0,T]}$ are independent Brownian motions with values in $\R^d$ under $\overline{\PPP}^N$ and $\mu_t$ is the solution to \eqref{eq: fokker planck}. In other words,  under $\overline{\PPP}^N$, the $(X_t^i,Y_t^i)_{t \in [0,T]}$ are independent and are a solution to \eqref{eq_general_mf}. With the notation of Section \ref{sec: proof well posedness}, we have 
$$\overline{\PPP}^N = (\PPP^\mu)^{\otimes N} = (P^{\mu}\otimes \delta_{Y(X_{[0,\cdot]})})^{\otimes N}.$$
Now, under  $\overline{\PPP}^N$, the random variables  $Z_i = \int_{[0,T]}\phi(t,X_t^i, Y_t^i)\rho(dt)$ are independent and $\E_{\overline{\PPP}^N}[Z_i] = \int_{[0,T]\times (\R^d \times \R^d)}\phi(t,x,y)\mu_t(dx,dy) \rho(dt)$.
Consider the event
	\begin{align*}
		\mathcal{A}^N&=\Big\{\int_{[0,T]\times (\R^{d}\times \R^d)}\phi(t,x,y)(\nu^{N}(dt,dx,dy)-\nu(dt,dx,dy))\geq \gamma \Big\}\label{eq:A_N} \\
		&= \Big\{\sum_{i=1}^N\Big(\int_{[0,T]}\phi(t,X_t^i, Y_t^i)\rho(dt)-\int_{[0,T]\times (\R^d \times \R^d)}\phi(t,x,y)\mu_t(dx,dy) \rho(dt)\Big)\geq N\gamma \Big\} \\
		&= \Big\{\sum_{i=1}^N (Z_i-\E_{\overline{\PPP}^N}[Z_i]) \geq N\gamma \Big\},
	\end{align*}
Moreover, by Jensen's inequality,
$$\sum_{i = 1}^N\E_{\overline{\PPP}^N}[|Z_i|^2] \leq N \int_{[0,T]\times (\R^d \times \R^d)}\phi(t,x,y)^2\mu_t(dx,dy) \rho(dt) = N|\phi|_{L^2(\nu)}^2$$ and for $q \geq 3$:
\begin{equation} \label{eq: moment q}
\E_{\overline{\PPP}^N}[|Z_i|^q] \leq \int_{[0,T]\times (\R^d \times \R^d)}|\phi(t,x,y)|^q\mu_t(dx,dy) \rho(dt) \leq |\phi|_\infty^{q-2}|\phi|_{L^2(\nu)}^2. 
\end{equation}	
We apply Bernstein's inequality \eqref{eq: bernstein classic} with $v = N|\phi|_{L^2(\nu)}^2$ and $c=|\phi|_\infty$ and obtain
	\begin{equation} \label{eq: Bernstein indep}
		\overline{\PPP}^N(\mathcal A^N)\leq \exp\Big(-\frac{N\gamma^{ 2}}{2(|\phi|_{L^2(\nu)}^2+|\phi|_\infty\gamma)}\Big).
	\end{equation}
Assuming now that $\phi$ is unbounded but satisfies $|\phi(t,x,y)|\leq C_\phi|(x,y)|^k$ for some $k >0$, we revisit the estimate \eqref{eq: moment q} to obtain
\begin{align*}
\E_{\overline{\PPP}^N}[|Z_i|^q] & \leq \int_{[0,T]\times (\R^d \times \R^d)}|\phi(t,x,y)|^q\mu_t(dx,dy) \rho(dt) \\
& \leq  C_\phi^q \int_{[0,T]}\E_{\overline{\PPP}^N}\big[|(X_t^i, Y_t^i)|^{q+k}\big] \rho(dt) \\
& \leq  C_\phi^q(1+\sigma^2)(q/2)!C_2^{q/2}
\end{align*}
thanks to Lemma \ref{lemma: p_estimate}. We now set $\widetilde v = N2C_\phi^2(1+\sigma^2)\exp(2k)$ and $\widetilde c = C_\phi C_2^{(3+k)/2}$ for instance, apply Bernstein's inequality \eqref{eq: bernstein classic} replacing $v,c$ by $\widetilde v, \widetilde c$ to obtain
\begin{equation} \label{eq: Bernstein indep_bis}
		\overline{\PPP}^N(\mathcal A^N)\leq \exp\Big(-\frac{N\gamma^{ 2}}{2(2C_\phi^2(1+\sigma^2)\exp(2k)+C_\phi C_2^{(3+k)/2}\gamma)}\Big).
	\end{equation}
	
\noindent {\it Step 2)} 	
Define, for $t \in [0,T]$ the random process 
\begin{equation*}
	\bar{M}_t^N=\sum_{i=1}^N\int_0^t((\sigma^{-1}b_1)(s,X_s,Y_s,\mu_s^N)-(\sigma^{-1}b_1)(s,X_s,Y_s,\mu_s))dB_s^i,
\end{equation*}
where the $(B^i_t)_{t \in [0,T]}$ are realised on the canonical space $\mathcal C(\R^d \times \R^d)$ via
	$$B_t^i=\int_0^t \sigma^{-1}(dX_s^i-b_1(s,X_s^i,Y_s^i, \mu_s)ds),\ 1\leq i\leq N,$$
and are thus independent Brownian motions under $\overline{\PPP}^N$.  
The process $(\overline{M}_t^N)_{t \in [0,T]}$ is a local martingale under $\overline{\mathbb{P}}^N$.
Moreover, we claim that for every $\tau>0$ there exist $\delta_0>0$ such that
\begin{equation} \label{eq: novikov}
\sup_{N\geq 1}\sup_{[0,T]}\mathbb{E}_{\overline{\PPP}^N}\big[\exp(\tau (\langle\bar{M}^N\rangle_{t+\delta}-\langle\bar{M}^N\rangle_{t}))\big]\leq \bar{C},
\end{equation}
for every $0\leq \delta\leq \delta_0$ and some $\bar{C} >0$. The proof of \eqref{eq: novikov} is delayed until Step 4). As a consequence, applying Novikov's criterion, the process
$\mathcal{E}_t(\bar{M}^N)=\exp{(\bar{M}_t^N-\frac{1}{2}\langle\bar{M}^N\rangle_t)}$ is a true martingale under $\overline{\mathbb{P}}^N$. Applying Girsanov's theorem, we realise the solution
$\PPP^N$ of the original particle system \eqref{eq: canonical} via
\begin{equation*}
 \mathbb{P}^N=\mathcal{E}_T(\bar{M}^N) \cdot \overline{\mathbb{P}}^N.
\end{equation*}


%

The rest of the argument closely follows \cite{della2020nonparametric}. We give it for sake of completeness. For $\AA\in \FF_T$, since $\bar{\Pp}^N$ and $\Pp^N$ coincide on $\FF_0$, we have
$$\Pp^N(\mathcal A)=\E_{\Pp^N}\big[\Pp^N(\mathcal A|\FF_0)\big]=\E_{\bar{\Pp}^N}\big[\Pp^N(\mathcal A|\FF_0)\big].$$
Moreover, 
	for any division $0=t_0<\dots<t_K\leq T$ and $\AA\in\FF_T$, we have
	\begin{equation} \label{eq: DMH}
		\E_{\bar{\Pp}^N}[\Pp(\AA|\FF_0)]=\E_{\bar{\Pp}^N}[\Pp(\AA|\FF_{t_K})]^{1/4^K}\prod_{j=1}^K \E_{\bar{\Pp}^N}[\exp(2(\langle\bar{M}^N_{\cdot}\rangle_{t_j}-\langle\bar{M}^N_{\cdot}\rangle_{t_{j-1}}))]^{j/4}.
	\end{equation}
Indeed, this is the generic estimate (34) in Step 1 of the proof of Theorem 18 in \cite{della2020nonparametric}, see also \cite{lacker2018strong}. 
Applying \eqref{eq: novikov} to \eqref{eq: DMH} with $\tau=2$, $t_j=jT/K$ and $K$ large enough so $t_{j}-t_{j-1}\leq \delta_0$, we conclude 
\begin{eqnarray*}
	\Pp^N(\mathcal A)&\leq&\E_{\bar{\Pp}^N}[\Pp(\AA|\FF_{t_K})]^{1/4^K}\prod_{j=1}^K \E_{\bar{\Pp}^N}[\exp(2(\langle\bar{M}^N_{\cdot}\rangle_{t_j}-\langle\bar{M}^N_{\cdot}\rangle_{t_{j-1}}))]^{j/4}\\  
	&\leq&\bar{\Pp}^N(\AA)^{1/4^K} \sup_{N\geq 1}\sup_{t\in [0,T-\delta_0]}\left(\E_{\bar{\Pp}^N}[\exp(2(\langle\bar{M}^N_{\cdot}\rangle_{t+\delta_0}-\langle\bar{M}^N_{\cdot}\rangle_{t}))]\right)^{K(K+1)/8}\\
	&\leq& \bar{C}^{K(K+1)/8}\bar{\Pp}^N(\AA)^{1/4^K}.
\end{eqnarray*}
Back to Step 2), with the help of \eqref{eq: Bernstein indep} and \eqref{eq: Bernstein indep_bis}, we deduce
	\begin{equation*} \label{eq: Bernstein true}
		\PPP^N(\mathcal A^N)\leq  \bar{C}^{K(K+1)/8}\exp\Big(-\frac{4^{-K}N\gamma^{ 2}}{2(|\phi|_{L^2(\nu)}^2+|\phi|_\infty\gamma)}\Big)
	\end{equation*}
	and
	\begin{equation*} \label{eq: Bernstein true_bis}
		\PPP^N(\mathcal A^N)\leq  \bar{C}^{K(K+1)/8}\exp\Big(-\frac{4^{-K}N\gamma^{ 2}}{2(2C_\phi^2(1+\sigma^2)\exp(2k)+C_\phi C_2^{(3+k)/2}\gamma)}\Big).
	\end{equation*}
Theorem \ref{theo:Bernstein} follows, with $c_1 = \bar{C}^{K(K+1)/8}$, $c_2 = \tfrac{1}{2}4^{-K}$ and $C_3 = c_2\frac{1}{\max(2(1+\sigma^2)\exp(2k), C_2^{(3+k)/2})}$.\\

\noindent {\it Step 4)}. It remains to prove the key estimate \eqref{eq: novikov}. 
We have 
	\begin{align*}
		\tau (\langle\bar{M}^N\rangle_{t+\delta}-\langle\bar{M}^N\rangle_{t})&=\sum_{i=1}^N\int_t^{t+\delta}\big|(\sigma^{-1}b_1)(s,X_s^i,Y_s^i,\mu_s^N)-(\sigma^{-1}b_1)(s,X_s^i,Y_s^i,\mu_s)\big|^2ds\\
		&\leq \kappa\int_t^{t+\delta} \sum_{i=1}^N\big|\int_{\R^d \times \R^d} \tilde{b}_1(s,(X_s^i,Y_s^i),(u, v))(\mu^N_s-\mu_s)(du, dv)\big|^2.
	\end{align*}
	with $\kappa=\tau \sigma^{-2}$. By Jensen's inequality together with the exchangeability of the system, we have
	\begin{align*}
		&\mathbb{E}_{\overline{\PPP}^N}\big[\exp(\tau (\langle\bar{M}^N\rangle_{t+\delta}-\langle\bar{M}^N\rangle_{t}))\big]\\
		&	\leq \frac{1}{\delta}\int_t^{t+\delta}\mathbb{E}_{\overline{\PPP}^N}\Big[\exp\big(\kappa\delta \sum_{i=1}^N\big|\int_{\R^d \times \R^d}\tilde{b}_1(s,(X_s^i,Y_s^i),(u,v))(\mu^N_s-\mu_s)(du,dv)\big|^2\big)\Big]\\
		&\leq\sup_{s \in [0,T]}\mathbb{E}_{\mathbb{\bar{P}}^N}\Big[\exp\big(\kappa\delta N \big|\int_{\R^d \times \R^d}\tilde{b}_1(s,(X_s^1,Y_s^1),(u,v))(\mu^N_s-\mu_s)(du,dv)\big|^2\big)\Big]\\
		&\leq\sup_{s \in [0,T]}\mathbb{E}_{\mathbb{\bar{P}}^N}\big[\exp(\kappa\delta \frac{1}{N}\sum_{j=1}^N |A^{1,j}|^2)\big]\\
		&\leq\sup_{s \in [0,T]}\frac{1}{2}\Big(\mathbb{E}_{\mathbb{\bar{P}}^N}\big[\exp(\kappa\delta \frac{1}{N} |A_s^{1,1}|^2)\big]+\mathbb{E}_{\mathbb{\bar{P}}^N}\big[\exp(\kappa\delta \frac{1}{N}\sum_{j=2}^N | A_s^{1,j}|^2)\big]\Big),
	\end{align*}
	where, 
	\begin{equation*}
		A^{i,j}_s=\tilde{b}_1(s,(X^i_s,Y^i_s), (X^j_s, Y_s^j))-\int_{\R^d \times \R^d}\tilde{b}_1(s,(X^i_s,Y^i_s), (u,v))\mu_s (du).
	\end{equation*}
	As a consequence of Lemma \ref{lemma: p_estimate} and Assumption \ref{ass:b_conditions}-(i) we have
	\begin{equation}\label{eq:bound_Aij}
		\mathbb{E}_{\overline{\PPP}^N}\big[|A^{i,j}_t|^{2p}\big]\leq C(1+\sigma^2)p!C_2^{p}, 
	\end{equation}
	for some $C>0$ related to the Lischitz constant of $\tilde{b}_1$, and this is the moment condition of a sub-Gaussian random variable. Since the $A^{1,j}_s$  are independent for $j=2,\dots,N$ under $\bar{\PPP}^N$, the random variable
	$\sum_{j=2}^N A^{1,j}_z$ is a $(N-1)C^\prime$ sub-Gaussian for another $C^\prime$ that only depends on $\sigma^2$ and $C_2$. The characterisation of sub-gaussianity  via a moment condition again implies 
	$$\E_{\overline{\PPP}^N}\big[\big|\sum_{j=2}^N A^{1,j}_s(X,Y)\big|^{2p}\big]\leq p!4^p(N-1)^p C^\prime.$$
We conclude
	$$\mathbb{E}_{\mathbb{\bar{P}}}^N\big[\exp\big(\kappa\delta \frac{1}{N}\sum_{j=2}^N | A^{1,j}|^2\big)\big]=1+\sum_{p\geq 1}\frac{(\kappa \delta)^p}{p!}\frac{1}{N^p}p!4^p(N-1)^p C^{\prime p}<\infty.$$
	Since the term $\mathbb{E}_{\mathbb{\bar{P}}^N}\big[\exp(\kappa\delta \frac{1}{N} |A_s^{1,1}|^2)\big]$ as a smaller order of magnitude, the estimate \eqref{eq: novikov} is established and this concludes the proof of Theorem \ref{theo:Bernstein}. 

\subsection{Proof of Theorem \ref{theo:Oracle}}\label{sec:nonparametric}

%
We start with a preliminary standard bias-variance upper estimate of the quadratique error of $\widehat{\mu}_{h}^{N}\left(t_{0}, x_{0}, y_0\right)$. Recall our definition of the bias $\mathcal{B}_{h}^{N}(\mu)\left(t_{0}, x_{0},y_0\right)$ at scale $h$, defined in \eqref{eq:oracle_B} and the variance $\mathsf{V}_{h}^{N}$ defined in \eqref{eq:oracle_V}.

\begin{lemma}\label{lemm:Oracle_exp} In the setting of Theorem \ref{theo:Oracle}, for $h \in \mathcal{H}^{N}$, we have
	$$
	\mathbb{E}_{\mathbb{P}^{N}}\big[\left(\widehat{\mu}_{h}^{N}\left(t_{0}, x_{0},y_0\right)-\mu_{t_{0}}\left(x_0,y_0\right)\right)^{2}\big] \lesssim \mathcal{B}_{h}^{N}(\mu)\left(t_{0}, x_{0},y_0\right)^{2}+\mathsf{V}_{h}^{N},
	$$
up to a constant that depends on $t_{0}, x_{0}, y_0, |K|_{\infty}$, $\sup_{(x,y)\in (x_0,y_0)+\mathrm{Supp}(K)}\mu_{t_0}(x,y)$ and  the constants $c_1, c_2$ of Theorem \ref{theo:Bernstein}. 
\end{lemma}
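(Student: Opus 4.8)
The plan is to perform a standard bias-variance decomposition of the pointwise quadratic risk and then control the stochastic (variance) part by integrating the Bernstein tail bound \eqref{eq:theo_Bernstein} from Theorem \ref{theo:Bernstein}. First I would write
$$
\widehat{\mu}_{h}^{N}(t_{0},x_{0},y_0)-\mu_{t_{0}}(x_0,y_0)
=\Big(\E_{\PPP^N}\big[\widehat{\mu}_{h}^{N}(t_{0},x_{0},y_0)\big]-\mu_{t_{0}}(x_0,y_0)\Big)
+\Big(\widehat{\mu}_{h}^{N}(t_{0},x_{0},y_0)-\E_{\PPP^N}\big[\widehat{\mu}_{h}^{N}(t_{0},x_{0},y_0)\big]\Big),
$$
so that $\E_{\PPP^N}[(\widehat{\mu}_{h}^{N}-\mu_{t_0})^2]$ splits into the square of the deterministic bias and the variance term. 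For the bias: since $\widehat{\mu}_{h}^{N}(t_0,x_0,y_0)=\int K_h(x_0-x,y_0-y)\mu_{t_0}^N(dx,dy)$ and the particles are identically distributed with common marginal law $\mu_{t_0}$ at time $t_0$, we have $\E_{\PPP^N}[\widehat{\mu}_{h}^{N}(t_0,x_0,y_0)]=\int K_h(x_0-x,y_0-y)\mu_{t_0}(x,y)\,dx\,dy$, and the absolute difference with $\mu_{t_0}(x_0,y_0)$ is bounded by $\mathcal B_h^N(\mu)(t_0,x_0,y_0)$ by its definition in \eqref{eq:oracle_B} (taking $h'=h$ in the supremum). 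This part is immediate.

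The substantive part is the variance bound. Writing $\phi(t,x,y)=\mathbf 1_{\{t=t_0\}}K_h(x_0-x,y_0-y)$ — more precisely taking $\rho=\delta_{t_0}$ in Theorem \ref{theo:Bernstein} — the centered quantity $\widehat{\mu}_{h}^{N}(t_0,x_0,y_0)-\E_{\PPP^N}[\widehat{\mu}_{h}^{N}(t_0,x_0,y_0)]$ is exactly $\mathcal E^N(\phi,\nu^N-\nu)$, to which \eqref{eq:theo_Bernstein} applies. I would then use $\E[W^2]=2\int_0^\infty \gamma\,\PPP(|W|\geq\gamma)\,d\gamma$ (applied also to the reflected test function $-\phi$ to get a two-sided bound) together with the tail estimate
$$
\PPP^N\big(|\mathcal E^N(\phi,\nu^N-\nu)|\geq\gamma\big)\leq 2c_1\exp\Big(-c_2\frac{N\gamma^2}{|\phi|_{L^2(\nu)}^2+|\phi|_\infty\gamma}\Big).
$$
Splitting the integral at the threshold $\gamma^\ast$ where the Gaussian and exponential regimes cross, the contribution of the Gaussian regime is $\lesssim c_2^{-1}|\phi|_{L^2(\nu)}^2/N$ and that of the exponential regime is of smaller order $\lesssim (c_2^{-1}|\phi|_\infty/N)^2$, provided $N$ is large enough. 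It remains to estimate $|\phi|_{L^2(\nu)}^2$ and $|\phi|_\infty$. We have $|\phi|_\infty=|K_h|_\infty=h^{-2d}|K|_\infty$, and
$$
|\phi|_{L^2(\nu)}^2=\int K_h(x_0-x,y_0-y)^2\mu_{t_0}(x,y)\,dx\,dy
\leq \Big(\sup_{(x,y)\in(x_0,y_0)+\mathrm{Supp}(K)}\mu_{t_0}(x,y)\Big)\,h^{-2d}|K|_{L^2}^2
$$
by the change of variables $u=h^{-1}(x_0-x)$, $v=h^{-1}(y_0-y)$ and the compact support of $K$. Hence the Gaussian term is $\lesssim c_2^{-1}\big(\sup_{(x,y)\in(x_0,y_0)+\mathrm{Supp}(K)}\mu_{t_0}(x,y)\big)|K|_{L^2}^2 h^{-2d}N^{-1}$; comparing with the definition \eqref{eq:oracle_V} of $\mathsf V_h^N$ — which additionally carries a $\log N$ factor — and using $h\geq N^{-1/d}(\log N)^{2/d}$ to absorb the smaller-order exponential term $h^{-4d}N^{-2}\lesssim h^{-2d}N^{-1}(\log N)^{-?}$, the whole variance part is $\lesssim\mathsf V_h^N$ up to the stated constants.

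The main obstacle is purely bookkeeping: carefully handling the two-regime split in the tail integral and checking that on the admissible grid $\mathcal H^N\subset[N^{-1/d}(\log N)^{2/d},1]$ the sub-leading exponential contribution $h^{-4d}N^{-2}$ is indeed dominated by $\mathsf V_h^N\asymp (\log N)h^{-2d}N^{-1}$, which is where the lower bound on $h$ and the ``for large enough $N$'' hypothesis enter. No delicate probabilistic input is needed beyond Theorem \ref{theo:Bernstein} itself; everything else is elementary kernel estimation.
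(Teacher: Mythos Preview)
Your overall strategy---bias-variance split followed by integrating the Bernstein tail bound---is exactly what the paper does, but there is a genuine error in the decomposition. You center around $\E_{\PPP^N}[\widehat{\mu}_h^N]$ and assert that ``the particles are identically distributed with common marginal law $\mu_{t_0}$ at time $t_0$''. This is false: under $\PPP^N$ the particles are \emph{interacting}, and while they are exchangeable, the one-particle marginal at time $t_0$ is \emph{not} the mean-field limit $\mu_{t_0}$ (it differs from it by a propagation-of-chaos correction). Consequently your two identifications both fail: $\E_{\PPP^N}[\widehat{\mu}_h^N(t_0,x_0,y_0)]\neq\int K_h(x_0-\cdot,y_0-\cdot)\mu_{t_0}$, and the centered quantity $\widehat{\mu}_h^N-\E_{\PPP^N}[\widehat{\mu}_h^N]$ is \emph{not} $\mathcal E^N(\phi,\nu^N-\nu)$. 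The Bernstein inequality \eqref{eq:theo_Bernstein} controls deviations of $\mu_{t_0}^N$ from the \emph{limit} $\mu_{t_0}$, not from its own $\PPP^N$-mean.

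The fix is the one the paper uses: decompose around the deterministic quantity $\int K_h(x_0-x,y_0-y)\mu_{t_0}(x,y)\,dx\,dy$ rather than around the expectation. Writing $I$ for this smoothing bias and $II=\int K_h(x_0-\cdot,y_0-\cdot)\,(\mu_{t_0}^N-\mu_{t_0})$, one has $I^2\leq\mathcal B_h^N(\mu)(t_0,x_0,y_0)^2$ and $II=\mathcal E^N(\phi,\nu^N-\nu)$ exactly, so \eqref{eq:theo_Bernstein} applies directly. One then bounds $\E_{\PPP^N}[II^2]=\int_0^\infty\PPP^N(|II|\geq u^{1/2})\,du$ as you outline, obtaining $\lesssim (Nh^{2d})^{-1}(1+(Nh^{2d})^{-1})\lesssim\mathsf V_h^N$ on the admissible grid. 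Note that with this decomposition the cross term does not vanish (since $II$ is not $\PPP^N$-centered), so you need $(I+II)^2\lesssim I^2+II^2$; this is harmless. The remainder of your argument---the $L^2(\mu_{t_0})$ and $L^\infty$ bounds on $K_h$, the two-regime tail integral, the use of $\min_h Nh^{2d}\gtrsim 1$---is correct and matches the paper.
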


\begin{proof}
	
	Write $\widehat{\mu}_{h}^{N}\left(t_{0}, x_{0},y_0\right)-\mu_{t_{0}}\left(x_{0},y_0\right)=I+I I$, with
	
	$$
	I=\int_{\mathbb{R}^{d}\times \R^d} K_{h}\left(x_{0}-x,y_0-z\right) \mu_{t_{0}}(x,y) d xdy-\mu_{t_{0}}\left(x_{0}, y_0\right)
	$$
	
	and
	
	$$
	I I=\int_{\mathbb{R}^{d} \times \R^d} K_{h}\left(x_{0}-x, y_0-y\right)\left(\mu_{t_{0}}^{N}(d x,dy)-\mu_{t_{0}}(x,y) d xdy\right) .
	$$
	
	We readily have $I^{2} \leq \mathcal{B}_{h}^{N}(\mu)\left(t_{0}, x_{0},y_0\right)^{2}$ for the squared bias term. For the variance term $II$, we first notice that since $(x,y) \mapsto \mu_t(x,y)$ is locally bounded, see for instance \cite[Theo. 2.1]{konakov2010explicit}, we have 
\begin{align*}
\left|K_{h}\left(x_{0}-\cdot,y_0-\cdot\right)\right|_{L^{2}(\mu_{t_{0}})}^{2} & =h^{-4d} \int_{\mathbb{R}^{d} \times \R^d}  K(h^{-1}x,h^{-1}y)^{2} \mu_{t_{0}}\left(z_{0}-z\right) d z \\
&\leq  h^{-2d} C_\mu(t_0,x_0,y_0) |K|_{L^2}^{2}.  
\end{align*}
where $C_\mu = \sup_{(x,y)\in (x_0,y_0)+\mathrm{Supp}(K)}\mu_{t_0}(x,y)$.	
Applying \eqref{eq:theo_Bernstein} of Theorem \ref{theo:Bernstein}, it follows that	
	\begin{align*}
		\mathbb{E}_{\mathbb{P}^{N}}\left[I I^{2}\right] & =\int_{0}^{\infty} \mathbb{P}^{N}\big(|I I| \geq u^{1 / 2}\big) d u \\
		& \leq 2 c_{1} \int_{0}^{\infty} \exp \Big(-\frac{c_{2} N u}{|K_{h}(x_{0}-\cdot, y_0-\cdot)|_{L^{2}(\mu_{t_{0}})}^{2}+|K_{h}(x_{0}-\cdot, y_0-\cdot )|_{\infty} u^{1 / 2}}\Big) d u \\
		& \leq 2 c_{1} \int_{0}^{\infty} \exp \left(-\frac{c_{2} N h^{2d} u}{C_\mu(t_0,x_0,y_0) |K|_{2}^{2}+|K|_{\infty} u^{1 / 2}}\right) d u \\
		& \lesssim (N h^{2d})^{-1}(1+(N h^{2d})^{-1}) \\
		& \lesssim \mathsf{V}_{h}^{N}
	\end{align*}
	where we used the fact that $\max _{h \in \mathcal{H}^{N}}(N h^{2d})^{-1} \lesssim 1$.
\end{proof}
We turn to the proof of Theorem \ref{theo:Oracle}. Recall that $\widehat{h}^{N}$ denotes the data-driven bandwidth defined in \ref{eq:Oracle_h}.\\

\noindent {\it Step 1)} For $h \in \mathcal{H}^{N}$, we successively have
\begin{align*}
	& \mathbb{E}_{\mathbb{P}^{N}}\left[\left(\widehat{\mu}_{\mathrm{GL} }^{N}\left(t_{0}, x_{0},y_0\right)-\mu_{t_{0}}\left(x_{0},y_0\right)\right)^{2}\right] \\
	& \lesssim \mathbb{E}_{\mathbb{P}^{N}}\big[\big(\widehat{\mu}_{\mathrm{GL}}^{N}(t_{0}, x_{0},y_0)-\widehat{\mu}_{h}^{N}(t_{0}, x_{0},y_0)\big)^{2}\big]+\mathbb{E}_{\mathbb{P}^{N}}\big[\big(\widehat{\mu}_{h}^{N}(t_{0}, x_{0},y_0)-\mu_{t_{0}}(x_{0},y_0)\big)^{2}\big] \\
& \lesssim \mathbb{E}_{\mathbb{P}^{N}}\big[\big\{\big(\widehat{\mu}_{\widehat{h}^{N}}^{N}(t_{0}, x_{0},y_0)-\widehat{\mu}_{h}^{N}(t_{0}, x_{0},y_0)\big)^{2}-\mathsf{V}_{h}^{N}-\mathsf{V}_{\widehat{h}^{N}}^{N}\big\}_{+}
+\mathsf{V}_{h}^{N}+\mathsf{V}_{\widehat{h}^{N}}^{N}\big] \\
&
\hspace{4mm}+\mathbb{E}_{\mathbb{P}^{N}}\big[\big(\widehat{\mu}_{h}^{N}(t_{0}, x_{0},y_0)-\mu_{t_{0}}(x_{0},y_0)\big)^{2}\big] 
\\
	& \lesssim \mathbb{E}_{\mathbb{P}^{N}}\big[\mathsf{~A}_{\max (\widehat{h}^{N}, h)}^{N}+\mathsf{V}_{h}^{N}+\mathsf{V}_{\widehat{h}^{N}}^{N}\big]+\mathbb{E}_{\mathbb{P}^{N}}\big[\big(\widehat{\mu}_{h}^{N}(t_{0}, x_{0},y_0)-\mu_{t_{0}}(x_{0},y_0)\big)^{2}\big] \\
	& \lesssim \mathbb{E}_{\mathbb{P}^{N}}\big[\mathsf{~A}_{h}^{N}\big]+\mathsf{V}_{h}^{N}+\mathbb{E}_{\mathbb{P}^{N}}\big[\mathrm{~A}_{\widehat{h}^{N}}^{N}+\mathsf{V}_{\widehat{h}^{N}}^{N}\big]+\mathbb{E}_{\mathbb{P}^{N}}\big[\big(\widehat{\mu}_{h}^{N}(t_{0}, x_{0},y_0)-\mu_{t_{0}}(x_{0},y_0)\big)^{2}\big] \\
	& \lesssim \mathbb{E}_{\mathbb{P}^{N}}\big[\mathsf{~A}_{h}^{N}\big]+\mathsf{V}_{h}^{N}+\mathcal{B}_{h}^{N}(\mu)(t_{0}, x_{0},y_0)^{2},
\end{align*}
where we applied Lemma \ref{lemm:Oracle_exp} to obtain the last line.\\ 

\noindent {\it Step 2)} 
We first estimate $\mathsf{A}_{h}^{N}$. Write $\mu_{h}\left(t_{0}, x_{0},y_0\right)$ for $\int_{\mathbb{R}^{d}\times \R^d} K_{h}\left(x_{0}-x, y_0-y\right) \mu_{t_{0}}(x,y) d xdy$. For $h, h^{\prime} \in \mathcal{H}^{N}$ with $h^{\prime} \leq h$, since
\begin{align*}
	& \left(\widehat{\mu}_{h}^{N}\left(t_{0}, x_{0},y_0\right)-\widehat{\mu}_{h^{\prime}}^{N}\left(t_{0}, x_{0},y_0\right)\right)^{2} \\
	& \leq 4\left(\widehat{\mu}_{h}^{N}\left(t_{0}, x_{0},y_0\right)-\mu_{h}\left(t_{0}, x_{0},y_0\right)\right)^{2}+4\left(\mu_{h}\left(t_{0}, x_{0},y_0\right)-\mu_{t_{0}}\left(x_{0},y_0\right)\right)^{2}\\
	&\hspace{4mm}+4\left(\mu_{h^{\prime}}\left(t_{0}, x_{0},y_0\right)-\mu_{t_{0}}\left(x_{0},y_0\right)\right)^{2} +4\left(\widehat{\mu}_{h^{\prime}}^{N}\left(t_{0}, x_{0},y_0\right)-\mu_{h^{\prime}}\left(t_{0}, x_{0}y_0\right)\right)^{2},
\end{align*}
we have
\begin{align*}
	& \left(\widehat{\mu}_{h}^{N}\left(t_{0}, x_{0},y_0\right)-\widehat{\mu}_{h^{\prime}}^{N}\left(t_{0}, x_{0},y_0\right)\right)^{2}-\mathsf{V}_{h}^{N}-\mathsf{V}_{h^{\prime}}^{N} \\
	& \leq 8 \mathcal{B}_{h}^{N}(\mu)\left(t_{0}, x_{0},y_0\right)^{2}+\big(4\left(\widehat{\mu}_{h}^{N}\left(t_{0}, x_{0},y_0\right)-\mu_{h}\left(t_{0}, x_{0},y_0\right)\right)^{2}-\mathsf{V}_{h}^{N}\big)\\	& +\big(4\left(\widehat{\mu}_{h^{\prime}}^{N}\left(t_{0}, x_{0},y_0\right)-\mu_{h^{\prime}}\left(t_{0}, x_{0},y_0\right)\right)^{2}-\mathsf{V}_{h^{\prime}}^{N}\big)
\end{align*}
using $h^{\prime} \leq h$ in order to bound $\left(\widehat{\mu}_{h^{\prime}}^{N}(t, x_0, y_0)-\mu_{h^{\prime}}\left(t_{0}, x_{0},y_0\right)\right)^{2}$ by the bias at scale $h$. Taking maximum over $h^{\prime} \leq h$, we obtain
\begin{equation}\label{eq:Oracle_max}
	\begin{aligned}
		& \max _{h^{\prime} \leq h}\big\{\left(\widehat{\mu}_{h}^{N}\left(t_{0}, x_{0},y_0\right)-\widehat{\mu}_{h^{\prime}}^{N}\left(t_{0}, x_{0},y_0\right)\right)^{2}-\mathsf{V}_{h}^{N}-\mathsf{V}_{h^{\prime}}^{N}\big\}_{+} \\
		\leq & 8\, \mathcal{B}_{h}^{N}(\mu)\left(t_{0}, x_{0},y_0\right)^{2}+\big\{4\left(\widehat{\mu}_{h}^{N}(t_{0}, x_{0},y_0)-\mu_{h}(t_{0}, x_{0},y_0)\right)^{2}-\mathsf{V}_{h}^{N}\big\}_{+}  \\
		&+\max _{h^{\prime} \leq h}\big\{4\left(\widehat{\mu}_{h^{\prime}}^{N}\left(t_{0}, z_{0}\right)-\mu_{h^{\prime}}\left(t_{0}, z_{0}\right)\right)^{2}-\mathsf{V}_{h^{\prime}}^{N}\big\}_{+}
	\end{aligned}.  
\end{equation}

\noindent {\it Step 3)} We estimate the expectation of the first stochastic term in the right-hand side of \eqref{eq:Oracle_max}. In order to do so, we slightly refine the upper estimate of the term $I I$ in the proof of Lemma \ref{lemm:Oracle_exp}. By \eqref{eq:theo_Bernstein} of Theorem \ref{theo:Bernstein} and using the classical inequalities
$$
\int_{\nu}^{\infty} \exp \left(-u^{r}\right) d u \leq 2 r^{-1} \nu^{1-r} \exp \left(-\nu^{r}\right), \quad \nu, r>0, \quad \nu \geq(2 / r)^{1 / r} ,
$$
and
$$
\exp \left(-\frac{a u^{p}}{b+c u^{p / 2}}\right) \leq \exp \left(-\frac{a u^{p}}{2 b}\right)+\exp \left(-\frac{a u^{p / 2}}{2 c}\right),\;\; u,a, b, c, p>0,
$$
we successively have
$$
\begin{aligned}
	& \mathbb{E}_{\mathbb{P}^{N}}\big[\big\{4\left(\widehat{\mu}_{h}^{N}(t_{0}, x_{0},y_0)-\mu_{h}(t_{0}, x_{0},y_0)\right)^{2}-\mathsf{V}_{h}^{N}\big\}_{+}\big] \\
	& =\int_{0}^{\infty} \mathbb{P}^{N}\big(4\left(\widehat{\mu}_{h}^{N}(t_{0}, x_{0},y_0)-\mu_{h}(t_{0}, x_{0},y_0)\right)^{2}-\mathsf{V}_{h}^{N} \geq u\big) d u \\
	& =\int_{0}^{\infty} \mathbb{P}^{N}\big(\big|\widehat{\mu}_{h}^{N}(t_{0}, x_{0},y_0)-\mu_{h}(t_{0}, x_{0},y_0)\big| \geq \tfrac{1}{2}(\mathrm{~V}_{h}^{N}+u)^{1 / 2}\big) d u \\
	& \leq 2 c_{1} \int_{\mathsf{V}_{h}^{N}}^{\infty} \exp \left(-\frac{c_{2} N h^{2d} \frac{1}{4} u}{C_\mu(t_0,x_0,y_0)|K|_{L^2}^{2}+|K|_{\infty} \frac{1}{2} u^{1 / 2}}\right) d u \\
	& \lesssim \int_{\mathsf{V}_{h}^{N}}^{\infty} \exp \Big(-\frac{c_{2} N h^{2d} u}{8 C_\mu(t_0,x_0,y_0)|K|_{L^2}^{2}}\Big) d u+\int_{\mathsf{V}_{h}^{N}}^{\infty} \exp \Big(-\frac{c_{2} N h^{2d} u^{1 / 2}}{4|K|_{\infty}}\Big) d u \\
	& \lesssim (N h^{2d})^{-1} \exp \Big(-\frac{c_{2} N h^{2d} \mathsf V_{h}^{N}}{8 C_\mu(t_0,x_0,y_0)|K|_{L^2}^{2}}\Big)+(N h^{2d})^{-2} N h^{2d}(\mathsf{~V}_{h}^{N})^{1 / 2} \exp \Big(-\frac{c_{2} N h^{2d}(\mathsf{~V}_{h}^{N})^{1/2}}{4|K|_{\infty}}\Big) \\
	& \lesssim (N h^{2d})^{-1} N^{-\varpi  c_{2} /\left(8 C_\mu(t_0,x_0,y_0)\right)}+(N h^{2d})^{-3 / 2}(\log N)^{1 / 2} \exp \Big(\frac{c_{2}|K|_{L^2} \varpi^{1 / 2}}{4|K|_{\infty}}(\log N)^{5 / 2}\Big), \\
	& \lesssim N^{-2}
\end{aligned}
$$
as soon as $\varpi  \geq 16 c_{2}^{-1} C_\mu(t_0,x_0,y_0)$, thanks to $\max _{h \in \mathcal{H}^{N}}(N h^{2d})^{-1} \lesssim 1$, and using $\min _{h \in \mathcal{H}^{N}} h \geq (N^{-1}(\log N)^{2})^{1 / d}$ to show that the second term is negligible in front of $N^{-2}$.\\

\noindent {\it Step 4)} For the second stochastic term, we use the rough estimate
\begin{align*}
& \mathbb{E}_{\mathbb{P}^{N}}\Big[\max _{h^{\prime} \leq h}\big\{4\left(\widehat{\mu}_{h^{\prime}}^{N}\left(t_{0}, x_{0},y_0\right)-\mu_{h^{\prime}}\left(t_{0}, x_{0},y_0\right)\right)^{2}-\mathsf{V}_{h^{\prime}}^{N}\big\}_{+}\Big] \\
&
\leq \sum_{h^{\prime} \leq h} \mathbb{E}_{\mathbb{P}^{N}}\Big[\big\{4\left(\widehat{\mu}_{h^{\prime}}^{N}\left(t_{0}, x_{0},y_0\right)-\mu_{h^{\prime}}\left(t_{0}, x_{0},y_0\right)\right)^{2}-\mathsf{V}_{h^{\prime}}^{N}\big\}_{+}\Big] \lesssim \operatorname{Card}(\mathcal{H}^{N}) N^{-2} \lesssim N^{-1},
\end{align*}
where we used Step 3) to bound the term $\mathbb{E}_{\mathbb{P}^{N}}[\{4\left(\widehat{\mu}_{h^{\prime}}^{N}\left(t_{0}, z_{0}\right)-\mu_{h^{\prime}}\left(t_{0}, z_{0}\right)\right)^{2}-\mathsf{V}_{h^{\prime}}^{N}\}_{+}]$ independently of $h$ together with $\operatorname{Card}(\mathcal{H}^{N}) \lesssim N$. In conclusion, we obtain through Steps 2) to 4) that 
$$\mathbb{E}_{\mathbb{P}^{N}}\big[\mathsf{~A}_{h}^{N}\big] \lesssim N^{-1}+\mathcal{B}_{h}^{N}(\mu)(t_{0}, x_{0},y_0)^{2}.$$ 
Thanks to Step 1) we conclude
$$
\mathbb{E}_{\mathbb{P}^{N}}\big[\big(\widehat{\mu}_{\mathrm{G} L}^{N}\left(t_{0}, x_{0},y_0\right)-\mu_{t_{0}}\left(x_{0},y_0\right)\big)^{2}\big] \lesssim \mathcal{B}_{h}^{N}(\mu)\left(t_{0}, x_{0},y_0\right)^{2}+\mathsf{V}_{h}^{N}+N^{-1}
$$
for any $h \in \mathcal{H}^{N}$. Since $N^{-1} \lesssim \mathsf{V}_{h}^{N}$ always, we obtain Theorem \ref{theo:Oracle}.

\subsection{Proof of Theorem \ref{theo:parameter}}
We write $|\cdot|$ for either the Euclidean norm (on $\R^6$) or the operator norm (on $\R^{6 \times 6}$).
From 
\begin{align*}
|\widehat \vartheta_N-\vartheta| &  = |M^{-1}(A-\Lambda)-\widehat M_N^{-1}(\widehat A_N-\widehat \Lambda_N)| \\
& \leq |(M^{-1}-\widehat M_N^{-1})(\widehat A_N-\widehat \Lambda_N)|+|M^{-1}(\widehat A_N-A)|+|M^{-1}(\widehat \Lambda_N-\Lambda)|,
\end{align*}
so that, for $\gamma \geq 0$, we have 
$$\PPP^N\big(|\widehat \vartheta_N-\vartheta| \geq \gamma \big) \leq I+II+III,$$
with
\begin{align*}
I & = \PPP^N\big( |(M^{-1}-\widehat M_N^{-1})(\widehat A_N-\widehat \Lambda_N)|\geq \tfrac{1}{3}\gamma \big) \\
II & = \PPP^N\big( |M^{-1}(\widehat A_N-A)|\geq \tfrac{1}{3}\gamma \big) \\
III & = \PPP^N\big( |M^{-1}(\widehat \Lambda_N-\Lambda)|\geq \tfrac{1}{3}\gamma \big) 
\end{align*}
Let $\rho >0$. We have
\begin{align}
I  & \leq  \PPP^N\big( |\widehat A_N-\widehat \Lambda_N|\geq \rho \big)+  \PPP^N\big( |M^{-1}-\widehat M_N^{-1}|\geq \tfrac{1}{3\rho}\gamma \big)  \nonumber \\
& \leq  \PPP^N\big( |\widehat A_N-A|\geq |A| \big)+  \PPP^N\big( |\widehat \Lambda_N-\Lambda|\geq |\Lambda| \big) + \PPP^N\big( |M^{-1}-\widehat M_N^{-1}|\geq \tfrac{1}{6(|A|+|\Lambda|)}\gamma \big) \label{eq: decomp I}
\end{align}
by triangle inequality and the specification $\rho = 2(|A|+|\Lambda|)$. Define $\xi_N = M-\widehat M_N$ and note that 
$$\widehat M_N^{-1}-M^{-1} = (\mathrm{Id}-M^{-1}\xi_N)^{-1}M^{-1}-M^{-1}.$$
On $|M^{-1}\xi_N| \leq \tfrac{1}{2}$, the usual Neumann series argument enables us to write
$$\widehat M_N^{-1}-M^{-1} = \big(\sum_{j \geq 0}(M^{-1}\xi_N)^j\big)M^{-1}-M^{-1} = \big(\sum_{j \geq 1}(M^{-1}\xi_N)^j\big)M^{-1}$$
and this implies
$$|\widehat M_N^{-1}-M^{-1} | \leq |\xi_N| |M^{-1}|^2 \sum_{j \geq 0}|\xi_N M^{-1}|^j = |\xi_N| \frac{|M^{-1}|^2}{1-|\xi_NM^{-1}|} \leq 2 |\xi_N||M^{-1}|^2.$$
It follows that 
\begin{equation} \label{eq: decom I bis}
\PPP^N\big( |M^{-1}-\widehat M_N^{-1}|\geq \tfrac{1}{6(|A|+|\Lambda|)}\gamma \big) \leq 2\PPP^N\Big( |\widehat M_N-M|\geq \min\Big(\frac{\gamma}{12(|A|+|\Lambda|)|M^{-1}|^2}, \frac{1}{|M^{-1}|}\Big)\Big)
\end{equation}
considering either $|M^{-1}\xi_N| \leq \tfrac{1}{2}$ or $|M^{-1}\xi_N| \geq \tfrac{1}{2}$. It remains to repeatedly apply \eqref{eq:theo_Bernstein_bis} of Theorem \ref{theo:Bernstein} that we use in the form
$$\PPP^N\big(|\mathcal E_N(\phi, \nu^N-\nu)| \geq \gamma\big) \leq 2c_1\exp\Big(-c_4\frac{N\gamma^2}{1+\gamma}\Big)$$
with $c_4 = \frac{c_3}{\max(C_\phi, C_\phi^2)}$. We first consider the term $I$. First, notice that by taking in Theorem \ref{theo:Bernstein} $\rho(dt) = \delta_T(dt)$, we have
$$|\widehat A_N-A| \leq C \max_{1 \leq k \leq 6}|\mathcal E_N(x^k+y^k, \nu^N-\nu)|,$$
$$|\widehat \Lambda_N-\Lambda| \leq C  \max_{1 \leq k \leq 6}|\mathcal E_N(x^k-\tfrac{1}{3}x^{k+2}+x^{k-1}y, \nu^N-\nu)|,$$
and
$$|\widehat M_N-M| \leq C \max_{1 \leq k,\ell \leq 6} |\mathcal E_N(x^ky^\ell, \nu^N-\nu)|$$
Inspecting \eqref{eq: decomp I}, we have
\begin{equation} \label{eq: concl I first}
\PPP^N\big( |\widehat A_N-A|\geq |A| \big) \leq 12c_1C\exp\Big(-c_5\frac{N|A|^2}{1+|A|}\Big)
\end{equation}
by Theorem \ref{theo:Bernstein}, with $c_5 = \max_{1 \leq k \leq 6}c_3\max(C_{\phi_k}, C_{\phi_k}^2)^{-1}$, where $\phi_k(x,y) = x^k+y^k$. Likewise
\begin{equation} \label{eq: concl I second}
\PPP^N\big( |\widehat \Lambda_N-\Lambda|\geq |\Lambda| \big) \leq 12c_1C\exp\Big(-c_6\frac{N|\Lambda|^2}{1+|\Lambda|}\Big)
\end{equation}
with $c_6 = \max_{1 \leq k \leq 6}c_3\max(C_{\phi_k}, C_{\phi_k}^2)^{-1}$, where  $\phi_k(x,y) = x^k-\tfrac{1}{3}x^{k+2}+x^{k-1}y$. 
Also, let $c_{|\Lambda|, |A|, |M|} = \max(12(|A|+|\Lambda|)|M^{-1}|^2, |M^{-1}|)^{-1}$. By \eqref{eq: decom I bis}  and Theorem \ref{theo:Bernstein}, we have
\begin{equation} \label{eq: concl I third}
\PPP^N\big( |M^{-1}-\widehat M_N^{-1}|\geq \tfrac{1}{6(|A|+|\Lambda|)}\gamma \big) \leq 144C\exp\Big(-c_7\frac{Nc_{|\Lambda|, |A|, |M|}^2\min(\gamma, 1)^2}{1+c_{|\Lambda|, |A|, |M|}\min(1,\gamma)}\Big),
\end{equation}
with $c_7 = \max_{1 \leq k, \ell \leq 6}c_3\max(C_{\phi_{k\ell}}, C_{\phi_{k\ell}}^2)^{-1}$, where now $\phi_{k\ell}(x,y) = x^ky^\ell$. Putting together \eqref{eq: concl I first}, \eqref{eq: concl I second} and \eqref{eq: concl I third}, we conclude
$$I \leq 168c_1 C \exp\Big(-c_8\frac{N\min(\gamma, 1, c_{|\Lambda|, |A|, |M|}^-)^2}{1+\max(\gamma, 1, c_{|\Lambda|, |A|, |M|}^-)}\Big).$$
with $c_{|\Lambda|, |A|, |M|}^- = \min(c_{|\Lambda|, |A|, |M|}, |A|, |\Lambda|)$ and $c_8 = \min(c_5, c_6, c_7)$. The terms $II$ and $III$ are bounded as in \eqref{eq: concl I first} and \eqref{eq: concl I second}, replacing formally $|A|$ and $|\Lambda|$ by $\tfrac{1}{3}\gamma |M^{-1}|^{-1}$: we have
\begin{equation} \label{eq: concl II}
II \leq 12c_1C\exp\Big(-c_5\frac{N(\tfrac{1}{3}\gamma |M^{-1}|^{-1})^2}{1+\tfrac{1}{3}\gamma |M^{-1}|^{-1}}\Big)
\end{equation}
and
\begin{equation} \label{eq: concl III}
III \leq 12c_1C\exp\Big(-c_6\frac{N(\tfrac{1}{3}\gamma |M^{-1}|^{-1})^2}{1+\tfrac{1}{3}\gamma |M^{-1}|^{-1}}\Big)
\end{equation}
Putting together \eqref{eq: concl I third}, \eqref{eq: concl II} and \eqref{eq: concl III}, there exist $\zeta_1 = \zeta_1(c_1) >0$, $\zeta_2 = \zeta_2(c_3, |\Lambda|, |A|, |M|, |M^{-1}|) >0$ such that
$$\PPP^N\big(|\widehat \vartheta_N - \vartheta| \geq \gamma\big) \leq \zeta_1\exp\Big(-\zeta_2\frac{N\min(\gamma, 1)^2}{1+\max(\gamma, 1)}\Big),$$ 
and the proof of Theorem \ref{theo:parameter} is complete.
\vspace{0.6cm}

\noindent \textbf{Acknowledgements:} We are grateful to St\'ephane Mischler for insightful discussions that motivated the genesis of this project.



\bibliographystyle{alpha}

\end{document}